\DeclareMathOperator{\rank}{rank}
\newtheorem{theorem}{Theorem}[section]
\newtheorem{proposition}[theorem]{Proposition}
\newtheorem{lemma}[theorem]{Lemma}
\newtheorem{corollary}[theorem]{Corollary}
\theoremstyle{definition}
\title{A proof of a Dodecahedron conjecture for distance sets}  
\author{
Hiroshi Nozaki and Masashi Shinohara
}
\begin{document}
\maketitle

\renewcommand{\thefootnote}{\fnsymbol{footnote}}
\footnote[0]{2010 Mathematics Subject Classification: 
 05D05 (05B05)  }

\begin{abstract}
A finite subset of a Euclidean space is called an $s$-distance set if 
there exist exactly $s$ values of the Euclidean distances between two distinct points in the set. 
In this paper, we prove that 
the maximum cardinality among all 5-distance sets in $\mathbb{R}^3$ is 20, and every $5$-distance set in $\mathbb{R}^3$ with $20$ points 
is similar to the vertex set of a regular dodecahedron. 
\end{abstract}

\textbf{Key words}: 
Distance sets, dodecahedron. \\
\section{Introduction}

For $X\subset \mathbb{R}^d$, let 
\[A(X)=\{d(x,y)\mid x,y\in X, x\ne y\},\]
where $d(x,y)$ is the Euclidean distance between $x$ and $y$. 
We call $X$ an {\it $s$-distance set} if $|A(X)|=s$. 
Two $s$-distance sets are said to be {\it isomorphic} if there exists a similar 
transformation from one to the other. 
One of the major problems in the theory of distance sets is 
to determine the maximum cardinality $g_d(s)$ of $s$-distance sets in $\mathbb{R}^d$ 
for given $s$ and $d$, and classify distance sets in $\mathbb{R}^d$ with $g_d(s)$ points up to isomorphism.  
An $s$-distance set $X$ in $\mathbb{R}^d$ is said to be {\it optimal} if $|X|=g_d(s)$. 
Clearly $g_1(s)=s+1$, and the optimal $s$-distance set is the set of $s+1$ points on the line whose two consecutive points have an equal interval. 
For the cases where $d=2$ or $s=2$, $s$-distance sets in $\mathbb{R}^d$ are well studied \cite{BBS, B84,ES66,EF96,LRS77,L97,S04,S08,W12}, 
because of their simple structures or the relationship to graphs, see Table~\ref{tb:1}. 
For $d\leq 8$, the maximum cardinality $g_d(2)$ are determined, and optimal $2$-distance sets in $\mathbb{R}^d$ are classified except for $d=8$ \cite{ES66,L97}. 
Moreover, it is known that $g_3(3)=12$, $g_3(4)=13$ and $g_4(4)=16$, and the classification is complete for the three cases \cite{Spre,SO20}. 
In particular, we recall the classification of optimal $s$-distance sets in $\mathbb{R}^d$ for $(d,s)=(2,4), (3,3)$ and $(3,4)$ 
as in Theorem~\ref{thm:known}. 

\begin{center}
\begin{table}[h] \label{tb:1}
\begin{center}
\begin{tabular}{cccccccc}
\hline 
$d$ & $2$ &  $3$  & $4$ & $5$ &  $6$  & $7$ & $8$   \\
\hline 
$g_d(2)$ & $5$ & $6$ & $10$ & $16$ & $27$ & $29$ & $45$\\
\hline 
\end{tabular}\qquad 
\begin{tabular}{cccccccc}
\hline 
$s$ & $2$ &  $3$  & $4$ & $5$ &  $6$   \\
\hline 
$g_2(s)$ & $5$ & $7$ & $9$ & $12$ & $13$\\
\hline 
\end{tabular}
\caption{Maximum cardinalities of $s$-distance sets in $\mathbb{R}^d$}
\end{center}
\end{table} 
\end{center}

\begin{theorem}\label{thm:known}{{\rm(\cite{S08,Spre, SO20})}}
\begin{itemize}
\item[{\rm (1)}] Every $9$-point $4$-distance set in $\mathbb{R}^2$ is isomorphic to the  vertices of the regular 
nonagon or one of the three configurations given in Figure~\ref{fig:img}\ (a)--(c). 
Moreover, every $8$-point $4$-distance set in $\mathbb{R}^2$ is isomorphic to the vertices of the regular 
octagon, the vertices of the regular septagon with its center, Figure~\ref{fig:img}\ (d) or 
$8$-point subsets of a $9$-point $4$-distance set. 
\item[{\rm (2)}] Every $12$-point $3$-distance set in $\mathbb{R}^3$ is isomorphic to the vertices of 
the icosahedron. 
\item[{\rm (3)}] Every $13$-point $4$-distance set in $\mathbb{R}^3$ is isomorphic to the vertices of 
the icosahedron with its center point or the vertex set of the cuboctahedron with its center point. 
\end{itemize}
\end{theorem}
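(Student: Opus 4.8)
The statement collects three classification results, one planar and two spatial, and I would prove each by the same two-stage template, so I describe the common strategy and then indicate the case-specific work. The first stage is an upper bound on cardinality. The polynomial (``linear programming'') method gives $|X|\le\binom{d+s}{d}$ for any $s$-distance set $X\subset\mathbb{R}^d$, which yields $15$ for $(d,s)=(2,4)$ and $20$ for $(3,3)$. Since these are far above the true maxima $9$ and $12$, the plan is not to lean on the global bound but to sharpen the count through a refined local analysis.

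The second stage, which is the heart of the argument, is the analysis of the local structure at a single point. Fix $p\in X$ and partition the remaining points by their distance from $p$ into layers $S_1,\dots,S_s$, where $S_i$ collects the points realizing the $i$-th distance. Each $S_i$ lies on a sphere centered at $p$, and every pairwise distance inside $S_i$ is one of the $s$ global distances, so $S_i$ is itself a spherical few-distance set. In the planar case the spheres are circles, so each $S_i$ is a set of points on a circle with at most $s$ chord-lengths; in $\mathbb{R}^3$ each $S_i$ lies on a $2$-sphere and is a spherical $s$-distance set, for which cardinality bounds and classifications are available. I would use these to enumerate the admissible layer sizes and shapes, recording for each $p$ its distance distribution $(|S_1|,\dots,|S_s|)$.

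The final stage glues the local data into a global configuration. Summing the distance distributions over all $p$ and double-counting the pairs realizing each fixed distance produces strong integrality and regularity constraints; at the same time, every candidate configuration must be isometrically realizable, so its Gram matrix must be positive semidefinite of rank at most $d$. Combining these two kinds of constraints collapses the list of feasible distributions to a handful, after which I would reconstruct explicit coordinates and verify rigidity, identifying the extremal sets as the regular nonagon or octagon (together with the configurations of Figure~\ref{fig:img}) in the plane, the icosahedron for $(3,3)$, and the icosahedron-with-center or the cuboctahedron-with-center for $(3,4)$. The main obstacle is exactly this case analysis: the combinatorial constraints by themselves admit many ``phantom'' distance distributions, and the genuine difficulty is eliminating those that cannot be realized in $\mathbb{R}^d$ --- or that, once realized, collapse to fewer than $s$ distinct distances --- which forces one to interlace the metric realizability conditions with the counting arguments rather than applying them in isolation.
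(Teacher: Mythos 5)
First, a point of orientation: Theorem~\ref{thm:known} is not proved in this paper at all; it is imported from the literature \cite{S08,Spre,SO20}. There, part (1) rests on Shinohara's lengthy planar case analysis built on diameter graphs, while parts (2) and (3) are obtained by exhaustive computer classification --- the Lison\v{e}k/Sz\"{o}ll\H{o}si--\"{O}sterg{\aa}rd incremental generation of representable colorings, with realizability tested via Gram-matrix rank conditions and Gr\"{o}bner bases, which is exactly the machinery this paper recalls in Section 4 and reuses for its own main theorem. Your proposal takes a different route (layers on spheres around a fixed point, double counting, then Gram-matrix realizability), which is legitimate as a starting heuristic, but as written it is a strategy outline rather than a proof.

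The genuine gap is that the entire classification step is asserted, not performed, and two of its load-bearing claims do not hold. (i) You assume that for layers on a $2$-sphere ``cardinality bounds and classifications are available'' for spherical few-distance sets; for $3$ or $4$ distances on $S^2$ no such classification exists, and obtaining one is essentially as hard as the theorem itself --- indeed the icosahedron is a $12$-point spherical $3$-distance set, so layers need not be small and the local analysis does not collapse. (ii) The gluing stage --- ``combining these two kinds of constraints collapses the list of feasible distributions to a handful'' --- is precisely where all of the difficulty lives, as your own final sentence concedes; nothing in the proposal carries it out. The scale of the problem is visible in Table~\ref{tab:SO2020_2} of this paper: even after imposing realizability (quasi-representability), there remain tens of thousands of candidate colorings at $6$--$8$ points, and for $(d,s)=(3,4)$ the only known way to finish is the computer search of \cite{SO20}. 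So the proposal reproduces the standard first reductions (polynomial bound, layering, Gram-matrix feasibility) but omits the case analysis that constitutes the actual proofs of all three parts.
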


\begin{figure}[htbp]
\begin{center}
  \begin{tabular}{c}

    \begin{minipage}{0.225\hsize}
      \begin{center}
        \includegraphics[clip, width=32mm]{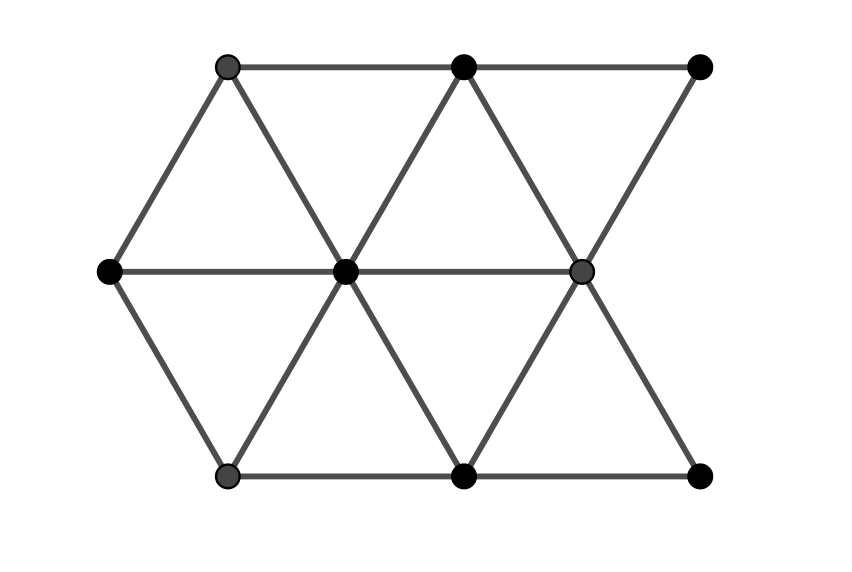}
      \end{center}
    \end{minipage}

    \begin{minipage}{0.225\hsize}
      \begin{center}
        \includegraphics[clip, width=32mm]{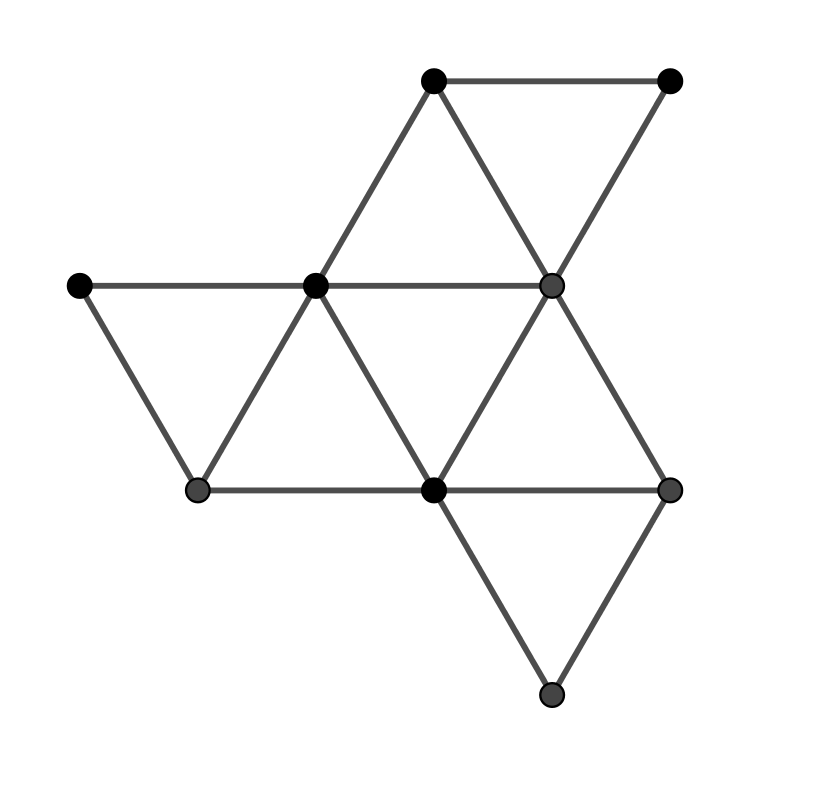}
      \end{center}
    \end{minipage}

    \begin{minipage}{0.225\hsize}
      \begin{center}
        \includegraphics[clip, width=30mm]{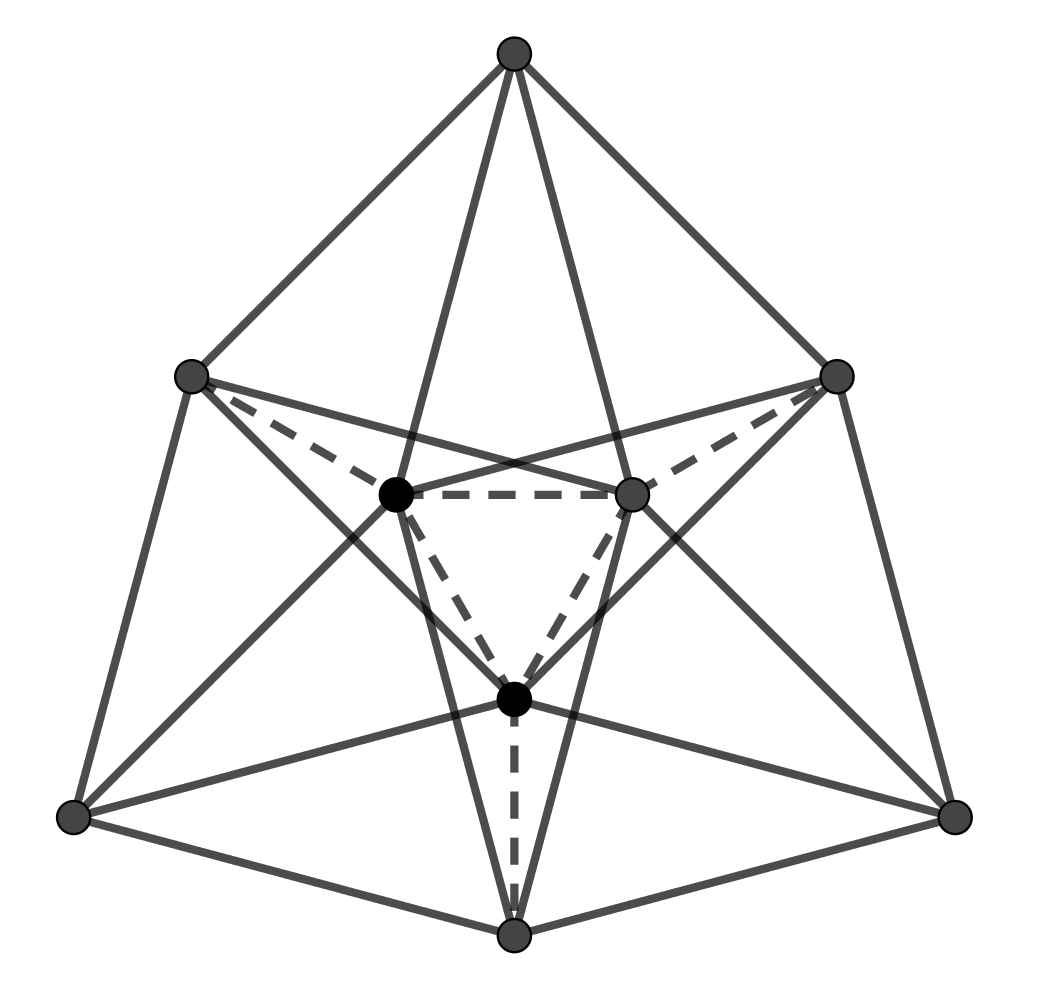}
      \end{center}
    \end{minipage}

    \begin{minipage}{0.225\hsize}
      \begin{center}
        \includegraphics[clip, width=30mm]{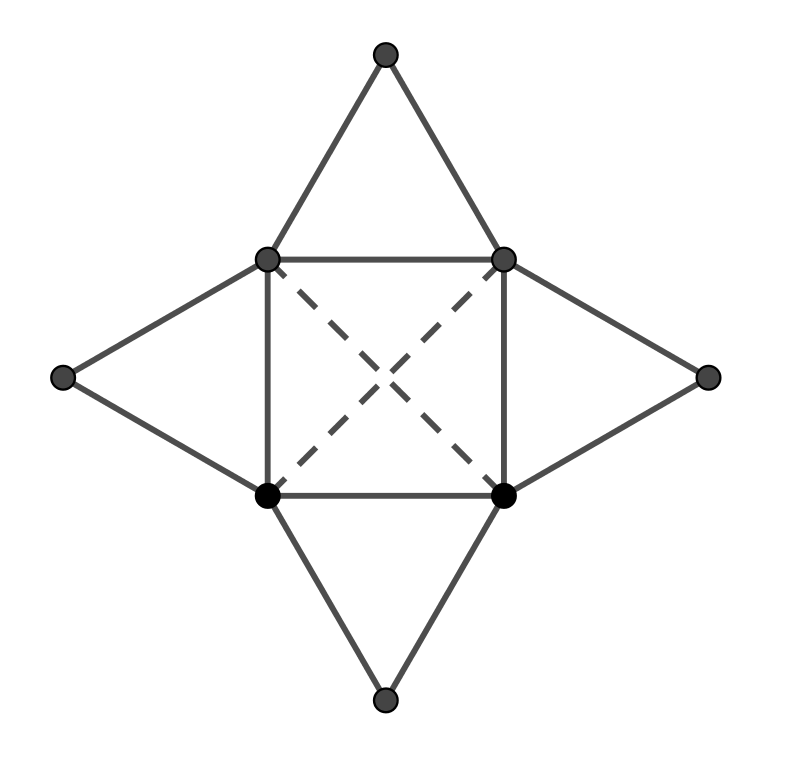}
      \end{center}
    \end{minipage}\\
    
     \hspace{-0.3cm}(a) \hspace{2.9cm} (b) \hspace{2.9cm} (c) \hspace{2.8cm} (d)
  \end{tabular}
  \caption{Maximal planar $4$-distance sets}
  \label{fig:img}
  \end{center}
\end{figure}

For a $2$-distance set $X$, we consider the graph on $X$ where two vertices are adjacent if they have the smallest distance in $X$. 
We can construct the $2$-distance set that has the structure of a given graph \cite{ES66}. 
Lison\v{e}k \cite{L97} gave an algorithm for a stepwise augmentation of representable graphs (adding one vertex per iteration), and classified the optimal 2-distance sets in $\mathbb{R}^d$ for $d\leq 7$ by a computer search.  Sz\"{o}ll\H{o}si and {O}sterg{\aa}rd \cite{SO20} extended this algorithm to $s$-distance sets and classified optimal $s$-distance sets for $(d,s)=(2,6), (3,4), (4,3)$. 
Indeed, their algorithm is applicable for small $s$ and $d$. 
In the present paper, we add geometrical observations in $\mathbb{R}^3$ to this algorithm, and obtain the main theorem as follows.  

\begin{theorem}\label{thm:main}
Every $20$-point $5$-distance set in $\mathbb{R}^3$ is isomorphic to the vertices of a regular 
dodecahedron. In particular, $g_3(5)=20$. 
\end{theorem}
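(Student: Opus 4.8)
The plan is to combine the computer-search / graph-augmentation machinery of Sz\"oll\H osi--Osterg{\aa}rd with geometric constraints peculiar to $\mathbb{R}^3$ in order to cut the search space down to a manageable size. First I would fix a putative $20$-point $5$-distance set $X\subset\mathbb{R}^3$ with distance set $A(X)=\{d_1<d_2<\dots<d_5\}$, and study the local structure at each point. For a fixed $x\in X$, the remaining $19$ points lie on at most $5$ concentric spheres centered at $x$ (one for each distance $d_i$), so the neighbourhood of $x$ is partitioned into at most five ``spherical layers.'' Each such layer, being a set of points on a common sphere with at most $5$ mutual distances, is itself a spherical $s'$-distance set with $s'\le 5$; by projecting to the sphere one obtains strong upper bounds (via the absolute bound / linear-programming bounds for spherical distance sets) on how many points can sit in each layer. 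Summing these bounds over the five layers must accommodate all $19$ other points, which already forces the degree sequence of the smallest-distance graph to be quite rigid and, I expect, to match the valency pattern of the dodecahedron (each vertex adjacent to exactly three others at distance $d_1$).

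Second, I would set up the representable-graph framework: define $G_1$ to be the graph on $X$ joining two points iff their distance is $d_1$. The preceding local analysis pins down the possible local configurations, and I would feed these into the stepwise augmentation algorithm, building $X$ one vertex at a time while enforcing at every step that (i) only five distance values occur, and (ii) the three coordinates embed consistently in $\mathbb{R}^3$ (so each new point is an intersection of three spheres, generically determined up to reflection). The $3$-dimensionality is the crucial pruning device absent from the purely combinatorial setting: once four affinely independent points are placed, every further point's position is algebraically overdetermined, so the branching in the search collapses rapidly. I would run this augmentation to enumerate all graphs that extend to a genuine $20$-point configuration, expecting the icosahedral/dodecahedral symmetry to emerge as the forced outcome.

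Third, having isolated the candidate graph(s), I would verify that the unique admissible one is the dodecahedral graph and then reconstruct the metric realisation explicitly. Here I would exploit Theorem~\ref{thm:known}: the $12$-point and $13$-point classifications in $\mathbb{R}^3$ constrain the subconfigurations of $X$ (for instance, any $12$-point $3$-distance subset must be an icosahedron, and appropriate subsets of our $X$ either are such configurations or are excluded), giving independent confirmation that the global shape is dodecahedral. Once the graph and the five distance ratios are fixed, the Cayley--Menger determinant conditions (vanishing of the relevant $6\times 6$ determinants expressing that five points are affinely dependent in $\mathbb{R}^3$) determine the realisation up to similarity, yielding the regular dodecahedron and hence $g_3(5)=20$.

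The main obstacle, I expect, is controlling the combinatorial explosion in the augmentation step: a priori there are enormously many $5$-distance graphs on $20$ vertices, and the local spherical-layer bounds, while strong, may not by themselves be tight enough to keep the search feasible. The delicate part will be proving sharp enough upper bounds on each spherical layer (and on compatible pairs of layers at adjacent points) so that the three-dimensional rigidity constraints have few enough candidates to check; equivalently, ruling out the many near-misses that satisfy all the combinatorial distance conditions but fail to embed isometrically in $\mathbb{R}^3$. I anticipate that the heart of the argument is a careful geometric case analysis showing that the only locally dodecahedral configuration closes up globally into the regular dodecahedron.
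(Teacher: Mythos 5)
Your proposal is missing the paper's central idea, and without it the strategy you describe does not become a proof. The paper does \emph{not} attack the $20$-point configuration by stepwise augmentation from scratch with geometric pruning; it first proves Theorem~\ref{thm:sub}: every $5$-distance set in $\mathbb{R}^3$ with at least $20$ points contains an $8$-point $s$-distance subset with $s\le 4$. This is obtained from the \emph{diameter graph}, not from smallest-distance graphs or spherical layers: by Dol'nikov's theorem (Theorem~\ref{graph}) two odd cycles in the diameter graph of a set in $\mathbb{R}^3$ must share a vertex, so if the diameter graph contains a $3$- or $5$-cycle its independence number is already at least $\lceil (20-m)/2\rceil+ \text{(cycle contribution)}\ge 8$ (Corollary~\ref{cycle}), and otherwise a degree analysis of graphs with no $3$- or $5$-cycle (Lemmas~\ref{deg_condition}--\ref{diameter:20}) again yields an independent set of size $8$; an independent set avoids the diameter, hence spans at most $4$ distances. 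This reduction is what makes the computation finite: one only has to take the $19+1074$ already-classified $8$-point colorings with at most $4$ distances (Lemma~\ref{CG}), enumerate single-point extensions, and compute cliques in the compatibility graph, where $\omega^\ast(C)\ge 12$ singles out exactly the subsets of the dodecahedron and the unique $12$-clique forces the dodecahedron (Lemma~\ref{lem:QRC} and Theorem~\ref{thm:contain8}). Your proposal has no counterpart to this step: nothing in it produces a guaranteed fewer-distance subset of prescribed size, which is precisely the nontrivial geometric input.

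The substitutes you offer do not close the gap. The spherical-layer bounds are far too weak: a layer around a point is a spherical set with at most $5$ distances, and the absolute/harmonic bounds for $5$-distance sets on $S^2$ allow dozens of points per layer (the Delsarte--Goethals--Seidel bound is $\binom{7}{5}+\binom{6}{4}=36$), so distributing $19$ points over five layers yields no contradiction and certainly does not force the dodecahedral valency $3$ in the $d_1$-graph, as you ``expect.'' Your appeal to Theorem~\ref{thm:known} likewise fails at the same point: a $20$-point $5$-distance set has no a priori reason to contain a $12$-point $3$-distance or $13$-point $4$-distance subset, so those classifications cannot be invoked until one \emph{proves} such subsets exist --- which is again Theorem~\ref{thm:sub}. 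Finally, you yourself identify the combinatorial explosion of direct augmentation as the main obstacle but propose no mechanism to overcome it; the paper's starting observation is exactly that the Lison\v{e}k/Sz\"{o}ll\H{o}si--\"{O}sterg{\aa}rd algorithm is feasible only for small $s$ and $n$ (it was carried out only up to $s\le 4$, and already at $n=6$ there are over $60{,}000$ admissible colorings), so running it for $s=5$ up to $n=20$, even with rank/Cayley--Menger pruning, is the approach the authors had to avoid, not the one they executed.
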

This was a long standing open problem \cite{CFG94} as well as the icosahedron conjecture \cite{ES66}. 
The icosahedron conjecture was already solved, and the set is the optimal 3-distance set in $\mathbb{R}^3$ \cite{Spre,SO20} as in Theorem~\ref{thm:known} (2). 
The following theorem plays a key role to prove our main theorem. 

\begin{theorem}\label{thm:sub}
Every $5$-distance set in $\mathbb{R}^3$ with at least $20$ points contains 
an $s$-distance set for some $s\le 4$ with $8$ points. 
\end{theorem}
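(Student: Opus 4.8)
My plan is to rephrase the statement in graph-theoretic language and then feed in the rigidity of equidistant and spherical configurations in $\mathbb{R}^3$. Write the five distances as $a_1<a_2<a_3<a_4<a_5$ and, for each $i$, let $G_i$ be the graph on the point set $X$ whose edges join the pairs at distance $a_i$. A subset of $8$ points that is an $s$-distance set with $s\le 4$ is precisely a set of $8$ points missing at least one of the five distances, that is, an independent set of size $8$ in some $G_i$. So the whole task reduces to proving that at least one $G_i$ has independence number at least $8$.

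Two geometric facts will drive the argument. First, in $\mathbb{R}^3$ at most four points can be mutually equidistant, so every $G_i$ has clique number at most $4$; equivalently, the complement of $G_i$ has no independent set of size $5$. Second, an independent set in $G_5$ is a $4$-distance set and hence has at most $13$ points by Theorem~\ref{thm:known}(3), while a set that is independent in both $G_4$ and $G_5$ is a $3$-distance set and so has at most $12$ points by Theorem~\ref{thm:known}(2). These bounds will be used to control how the two largest distances can be distributed.

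First I would fix a point $p\in X$ and partition the remaining $\ge 19$ points according to their distance to $p$; they lie on at most five concentric spheres $S_1,\dots,S_5$ of radii $a_1,\dots,a_5$. Pigeonhole forces some $S_j$ to carry at least $4$ points, and for larger sphere populations the spherical sub-configuration is severely constrained: a set of points on a $2$-sphere realising only two (resp. three) distances has at most $6$ (resp. $12$) points, realised by the octahedron and the icosahedron. The key point is that $\{p\}\cup(S_j\cap X)$ uses only the single radial distance $a_j$ together with the chord distances occurring inside $S_j$, so a spherical subset with few internal distances immediately yields an $8$-point subset of $X$ with at most four distances once the radial distance is accounted for.

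To manufacture such a configuration I would combine an averaging argument — the sparsest of the five graphs $G_i$ has at most $\binom{|X|}{2}/5$ edges and hence, by a Tur\'an-type bound, an independent set of size $5$ — with the spherical rigidity above to climb from $5$ up to the required $8$ points, using the clique bound $\omega(G_i)\le 4$ to rule out dense local patterns. The main obstacle, and where the bulk of the work lies, is the case analysis: one must track which of the five distances actually occur inside a heavy sphere, whether the radial distance $a_j$ coincides with a chord distance, and how $a_4$ and $a_5$ are forced to appear (or not) across the spheres. I expect the delicate cases to be those in which every sphere carries close to the average number of points and the occurring distances are spread as evenly as possible, so that no single classical spherical configuration can be read off directly and the tetrahedron bound together with Theorem~\ref{thm:known} must be invoked repeatedly to close the argument.
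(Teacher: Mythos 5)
Your opening reduction is sound and in fact mirrors the paper's: an $8$-point independent set in the graph of a single distance class misses that distance, hence is an $s$-distance set with $s\le 4$, so it suffices to show that some $G_i$ has independence number at least $8$ (the paper works only with $G_5$, the diameter graph). The gap is that nothing in your proposal actually delivers independence number $8$. Your combinatorial inputs cannot: a graph on $20$ vertices with clique number at most $4$ can have independence number $5$ (five disjoint copies of $K_4$), and since $R(5,5)>20$ there even exist such graphs with independence number $4$; so the Tur\'an-type averaging bound (which gives only $5$) together with $\omega(G_i)\le 4$ can never be pushed to $8$ by case analysis alone --- some genuinely geometric property of the distance graphs must be invoked. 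Your geometric inputs do not close this gap either. The pigeonhole over the five spheres around $p$ guarantees only $\lceil 19/5\rceil=4$ points on a common sphere, whereas your plan of adjoining the radial distance to a spherical few-distance configuration needs at least $7$ points on one sphere, and nothing forces that. Finally, the bounds $g_3(3)=12$ and $g_3(4)=13$ from Theorem~\ref{thm:known} are \emph{upper} bounds on the size of few-distance sets: they bound independent sets in $G_4$ and $G_5$ from above, which works against you; they cannot produce a large independent set.

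The missing idea is the one the paper is built on: Dol'nikov's theorem (Theorem~\ref{graph}), that in $\mathbb{R}^3$ any two odd cycles of the \emph{diameter} graph must share a vertex. This is a property special to the largest distance (it fails for the other $G_i$), and it is exactly what replaces your ``clique bound plus case analysis.'' It gives immediately (Corollary~\ref{cycle}) that if $DG(X)$ contains a $3$- or $5$-cycle $C$ of length $m$, then deleting $C$ leaves a bipartite graph, so $\alpha(DG(X))\ge\lceil(20-m)/2\rceil\ge 8$. In the remaining case $DG(X)$ has no $3$- or $5$-cycles, and the paper controls $f(n)=\min\{\alpha(G)\mid G\in\mathscr{G}_n\}$, where $\mathscr{G}_n$ is the family of $n$-vertex graphs with no $3$- or $5$-cycle, via a degree-condition lemma (Lemma~\ref{deg_condition}), the values $f(3),f(5),f(8),f(10),f(13),f(17)$, and a connectivity argument, to conclude $\alpha(DG(X))\ge 8$; Dol'nikov's theorem is used again there to handle the union of cycles and paths in the low-degree case. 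Without this theorem, or a comparably strong geometric fact about diameters in $\mathbb{R}^3$, your plan cannot be completed.
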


The main concept to prove Theorem~\ref{thm:sub} is the diameter graph \cite{Dol} of a subset in $\mathbb{R}^d$. 
The diameter graph of a set $X$ in $\mathbb{R}^d$ is the graph on $X$ where two vertices are adjacent if the two vertices have the largest distance in $X$. 
The subset of $X$ corresponding to an independence set of the diameter graph does not have the largest distance in $X$. Thus we can verify the existence of an $s'$-distance subset of an $s$-distance set $X$ with $s'<s$ by the independence number of its diameter graph. The existence of $s'$-distance set is useful to determine an optimal $s$-distance set in low dimensions \cite{S08,Spre}. 
Ramsey numbers or complementary Ramsey numbers \cite{MS19} are also expected to show the existence of an $s'$-distance subsets of an $s$-distance set.  

In section 2, we discuss the distances in a regular dodecahedron and 
we enumerate the number of $8$-point subsets of a dodecahedron which are $3$- or $4$-distance. 
In section 3, we consider the independence numbers of diameter graphs and prove Theorem~\ref{thm:sub}.  
The classification of $8$-point $s$-distance sets in $\mathbb{R}^3$ for $s\le 4$ are essentially obtained by 
Sz\"{o}ll\H{o}si and \"{O}sterg{\aa}rd \cite{SO20}. 
In section 4, we introduce their methods, where $s$-distance sets are constructed from $s$-colorings. 
In section 5, we classify 8-point $3$- or $4$-distance sets which may be subsets of a 20-point 5-distance set in $\mathbb{R}^3$, and prove Theorem~\ref{thm:main}.

\section{Dodecahedron and its subsets}

Let $G=(V,E)$ be a simple graph, where $V=V(G)$ and $E=E(G)$ are the vertex set and the edge set of $G$, respectively. 
A subset $W$ of $V(G)$ is an {\it independent set} ({\it resp.} {\it clique}) of $G$ 
if any two vertices in $W$ are nonadjacent ({\it resp.} adjacent). 
The {\it independence number} $\alpha (G)$ ({\it resp.} {\it clique number} $\omega(G)$) of a graph $G$ 
is the maximum cardinality among the independent sets ({\it resp.} cliques) of $G$. 
Let $R_i=\{ (x,y) \in V\times V \mid  \mathfrak{d}(x,y)=i \}$, where $\mathfrak{d}$ is the shortest-path distance.    
The {\it $i$-th distance matrix} $A_i$ of $G$ is the matrix indexed by $V$ whose $(x,y)$-entry is 
$1$ if $(x,y) \in R_i$, and 0 otherwise.  
A simple graph $G$ is a {\it distance-regular graph} \cite{BCNb, DKT16} if for any non-negative integers $i,j,k$, 
the number $p_{ij}^k=|\{z \in V \mid  (x,z) \in R_i, (z,y) \in R_j \}|$ is independent of the choice of $(x,y) \in R_k$. 
The algebra $\mathfrak{A}$ spanned by $\{A_i \}$ over the complex numbers is called 
the {\it Bose--Mesner algebra} of a distance-regular graph. 
There exists another basis $\{E_i\}$ such that $E_iE_j= \delta_{ij}E_i$, where $\delta_{ij}$ is the Kronecker delta. The matrices $E_i$ are called {\it primitive idempotents}, and the matrices are positive semidefinite. 
The matrices $E_i$ can be interpreted as the Gram matrices of some spherical sets that have the structure of the distance-regular graph, and $E_i$ are called {\it spherical representations} of the graph. 
The following matrices 
\begin{align*}
P=(p_i(j))_{j,i}&  \text{ for $A_i=\sum_{j} p_i(j) E_j$}, \\
Q=(q_i(j))_{j,i}&  \text{ for $E_i=\frac{1}{|V|} \sum_{j} q_i(j) A_j$}, 
\end{align*}
are called the {\it first} and {\it second eigenmatrices}, respectively.  
The entries of $P$ are the eigenvalues of $A_i$, and the entries 
of $Q$ are the inner products of the spherical representation of $E_i$. 
The first row $q_i(0)$ of $Q$ is the rank of $E_i$, that is the dimension where the representation $E_i$ exists.

Let $\mathcal{D}_{20}$ be the vertex set of the dodecahedron with edge length $1$. The set $\mathcal{D}_{20}$ is a 5-distance set, and let $d_1=1,d_2,d_3,d_4,d_5$ be the 5-distances of $\mathcal{D}_{20}$ with $1=d_1<d_2<d_3<d_4<d_5$. The second-smallest distance $d_2$ is the length of a diagonal line of a face, namely $d_2=\tau=(1+\sqrt{5})/2$. 
Since $\mathcal{D}_{20}$ contains the cube with edge length $\tau$, 
the other distances in the cube are  
$d_3=\sqrt{2}\tau$ and $d_5=\sqrt{3}\tau$. 
We can calculate $d_4=\sqrt{3\tau^2-1}=\tau+1$ by Pythagorean theorem. 
  Let $\mathfrak{G}$ be the dodecahedron graph $\mathfrak{G}=(V, E)$, where $V=\mathcal{D}_{20}$ and $E=\{(x,y) \mid d(x,y)=d_1\}$. 
  The graph $\mathfrak{G}$ is a distance-regular graph, and 
  $d(x,y)=d_i$ if and only if $\mathfrak{d}(x,y)=i$ for each $i\in \{0,1,\ldots,5\}$, where $d_0=0$.  
  The second eigenmatrix $Q$ of $\mathfrak{G}$ is 
 \[
 Q=
 \begin{pmatrix}
 1 & 3 & 3 & 4 & 4 & 5 \\
  1 & \sqrt{5} & -\sqrt{5} & -8/3 & 0 & 5/3 \\
 1 & 1 & 1 & 2/3 & -2 & -5/3 \\
 1 & -1 & -1 & 2/3 & 2 & -5/3 \\
  1 & -\sqrt{5} & \sqrt{5} & -8/3 & 0 & 5/3 \\
 1 & -3 & -3 & 4 & -4 & 5 \\
\end{pmatrix}.
 \]
There are two representations $E_2$ and $E_3$ in the 3-dimensional sphere. 
Indeed, both $E_2$ and $E_3$ are the dodecahedron, and the two graphs of $A_1$ and $A_4$ are isomorphic. 
Let $\Phi$ be the field automorphism of $\mathbb{Q}(\sqrt{5})$ such that $\Phi(\sqrt{5})=-\sqrt{5}$ and $\Phi$ fixes all rationals. 
For a matrix $M=(m_{ij})$ with $m_{ij} \in \mathbb{Q}(\sqrt{5})$, the map $\hat{\Phi}(M)$ is defined by applying $\Phi$ to the entries of $M$, namely $\hat{\Phi}(M)=(\Phi(m_{ij}))$. 
It follows that 
\begin{align*}
\hat{\Phi}(E_2)&=3A_0+\Phi(\sqrt{5})A_1+A_2-A_3-\Phi(\sqrt{5})A_4-3A_5\\
&=3A_0-\sqrt{5}A_1+A_2-A_3+\sqrt{5}A_4-3A_5=E_3.
\end{align*}
A principal submatrix $T$ of $E_2$ corresponds to a subset of the dodecahedron. The matrix $\hat{\Phi}(T)$ is a principal submatrix of $E_3$, and $\hat{\Phi}(T)$ also corresponds to a subset of the dodecahedron. 
The two matrices $T$ and $\hat{\Phi}(T)$ may not be isomorphic as distance sets, 
but the two colorings of them are equivalent (see Section~\ref{sec:5} for colorings).  
This observation gives the following lemma.  
\begin{lemma}\label{lem:structures}
Let $X$ be a subset of the dodecahedron in the unit sphere $S^2$. 
Let $M$ be the Gram matrix of $X$. Let $\hat{\Phi}$ is the map defined as above. Then $M$ and $\hat{\Phi}(M)$ are subsets of the dodecahedron, and the two colorings of them are equivalent. 
\end{lemma}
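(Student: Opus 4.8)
The plan is to reduce everything to the identity $\hat{\Phi}(E_2)=E_3$ already established above, where $E_2,E_3$ are the two rank-$3$ spherical representations of the distance-regular graph $\mathfrak{G}$, both realizing the dodecahedron on $S^2$. Since $X$ is a subset of the dodecahedron on the unit sphere, I would first label its points by a vertex subset $W\subseteq V=\mathcal{D}_{20}$ and orient the picture so that, after normalizing to unit vectors, the Gram matrix is $M=\tfrac{20}{3}\,E_2[W]$, where $E_2[W]$ is the principal submatrix of $E_2$ indexed by $W$; the scalar is forced to be $20/3$ because the diagonal entry of $E_2$ equals $q_2(0)/|V|=3/20$. (If $X$ were instead realized by $E_3$ the argument is symmetric, since $\hat{\Phi}$ interchanges the two representations.) This normalization is the only fact about $M$ that I need.

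The first assertion is then immediate from the compatibility of $\hat{\Phi}$ with the operations involved. Because $\hat{\Phi}$ acts entrywise and $\Phi$ fixes $\mathbb{Q}$, it commutes both with passing to a principal submatrix and with multiplication by the rational scalar $20/3$. Hence $\hat{\Phi}(M)=\tfrac{20}{3}\,\hat{\Phi}(E_2[W])=\tfrac{20}{3}\,(\hat{\Phi}(E_2))[W]=\tfrac{20}{3}\,E_3[W]$. Since $E_3$ is a bona fide spherical representation, positive semidefinite of rank $3$ and realizing the dodecahedron on $S^2$, its principal submatrix $E_3[W]$ is the Gram matrix of the subset $W$ of that dodecahedron. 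Therefore $\hat{\Phi}(M)$ is the Gram matrix of a subset of the dodecahedron, which is what the first part of the lemma requires ($M$ itself being a subset of the dodecahedron by hypothesis).

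For the coloring equivalence I would observe that both representations separate the relations into distinct inner-product values. A pair in relation $R_j$ contributes the entry $q_2(j)/3$ to $M$ and the entry $q_3(j)/3=\Phi(q_2(j))/3$ to $\hat{\Phi}(M)$; reading off the relevant column of $Q$ shows that the six values $q_2(j)$, namely $3,\sqrt{5},1,-1,-\sqrt{5},-3$, are pairwise distinct, and the same holds for $q_3(j)$. Consequently, for each of $M$ and $\hat{\Phi}(M)$ the partition of the off-diagonal pairs of $W$ into distance classes coincides exactly with the partition into graph relations $R_j$, a partition that is intrinsic to $W$ and independent of the representation used. The identity map on $W$ thus carries the color classes of $M$ bijectively onto those of $\hat{\Phi}(M)$, with the color correspondence $q_2(j)/3\leftrightarrow q_3(j)/3$ given through the common index $j$; this is precisely an equivalence of colorings in the sense of Section~\ref{sec:5}.

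The only point that genuinely requires care, rather than routine verification, is conceptual: the distance values really are permuted by $\Phi$ (for instance the edge relation $R_1$ yields inner product $\sqrt{5}/3$ in $E_2$ but $-\sqrt{5}/3$ in $E_3$), so $M$ and $\hat{\Phi}(M)$ need not be isomorphic as distance sets. The content of the lemma is exactly that this permutation is invisible at the level of colorings, and the reason is that it is induced by a single field automorphism $\Phi$ acting simultaneously and injectively on all the relation-values at once; it is this simultaneity and injectivity that must be made explicit.
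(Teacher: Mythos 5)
Your proof is correct and takes essentially the same route as the paper: the paper also deduces the lemma directly from the identity $\hat{\Phi}(E_2)=E_3$, observing that a principal submatrix of $E_2$ is sent by $\hat{\Phi}$ to the corresponding principal submatrix of $E_3$ (hence again corresponds to a subset of the dodecahedron), and that the colorings are equivalent because $\Phi$ acts injectively on the relation values, so the partition of pairs into equal-value classes is unchanged. Your additional details, the scalar normalization $M=\tfrac{20}{3}E_2[W]$ and the explicit check that the values $q_2(j)$ (and $q_3(j)$) are pairwise distinct, merely make explicit what the paper leaves implicit.
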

Now we discuss 8-point subsets of the dodecahedron which have only 3 or 4 distances. 
\begin{lemma}\label{lem:SubDodeca3ds}
There exists unique $3$-distance subset of a regular dodecahedron with $8$ points up to isomorphism. The subset is the cube.  
\end{lemma}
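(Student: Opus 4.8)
The plan is to parametrise a hypothetical $8$-point $3$-distance subset $X\subseteq \mathcal{D}_{20}$ by the set $T\subset\{1,2,3,4,5\}$, $|T|=3$, of graph-distances it realises (recall $d(x,y)=d_i\iff \mathfrak{d}(x,y)=i$). Existence is immediate: any inscribed cube realises exactly $d_2=\tau$, $d_3=\sqrt2\,\tau$, $d_5=\sqrt3\,\tau$, i.e.\ $T=\{2,3,5\}$, and all such cubes are carried onto one another by the symmetry group of the dodecahedron, so they form a single isomorphism class. For uniqueness I want to force $T=\{2,3,5\}$ and then show $X$ is a cube. The key reformulation is that $X$ uses no distance outside $T$, so $X$ is an independent set of size $8$ in the complementary graph $H=\sum_{i\notin T}A_i=A_p+A_q$, where $\{p,q\}=\{1,2,3,4,5\}\setminus T$.

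The main tool is the ratio (Hoffman) bound $\alpha(H)\le |V|\,\frac{-\lambda_{\min}(H)}{k_H-\lambda_{\min}(H)}$ for the $k_H$-regular graph $H$. Since $\mathfrak{G}$ is distance-regular, every $A_i$ is a polynomial in $A_1$, so the eigenvalues of $H=A_p+A_q$ arise by summing the corresponding values of the standard sequence over the spectrum $\{3,\sqrt5,1,0,-2,-\sqrt5\}$ of $A_1$; these data are encoded in the eigenmatrices. Running through all $\binom{5}{3}=10$ choices of $T$, I expect seven of them to give a bound strictly below $8$ and hence to be eliminated outright, leaving exactly the three triples $\{2,3,5\}$, $\{1,2,3\}$ and $\{2,3,4\}$ (equivalently, the three complementary pairs $\{1,4\}$, $\{4,5\}$, $\{1,5\}$ contained in $\{1,4,5\}$).

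For $T=\{2,3,5\}$ the complement is $A_1+A_4$ and the bound is \emph{exactly} $8$. Equality in the ratio bound forces the partition $(X,\mathcal{D}_{20}\setminus X)$ to be equitable; moreover $X$ avoids $d_1$, so $X$ is a maximum independent set of the dodecahedron graph $\mathfrak{G}$ itself. I would combine these two facts to pin $X$ down as a cube. The two remaining triples both exclude $d_5$, so the corresponding subsets are antipode-free, and here the bound ($\approx 8.94$) is not tight, which is the crux. To avoid duplicating work I would first invoke Lemma~\ref{lem:structures}: the automorphism $\hat{\Phi}$ interchanges the two $\sqrt5$-conjugate relations $A_1$ and $A_4$ while fixing $A_2,A_3,A_5$, so a $\{1,2,3\}$-subset and a $\{2,3,4\}$-subset have equivalent colourings; hence it suffices to exclude one of them.

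The hard part is therefore to rule out a single antipode-free triple, say $T=\{2,3,4\}$, at $8$ points, where the spectral bound leaves a gap. I would exploit the fact that $\mathfrak{G}$ is the antipodal double cover of the Petersen graph: excluding $d_5$ means choosing at most one vertex from each of the $10$ antipodal fibres, and excluding $d_1$ while using only $d_2,d_3,d_4$ imposes on the $8$ selected fibres a consistency condition on the chosen representatives that amounts to the existence of a section of the cover over the induced subgraph of Petersen on those $8$ vertices. Since the covering voltages of $\mathfrak{G}$ over the Petersen graph are non-trivial on the $5$-cycles surviving the deletion of two Petersen vertices, no such section exists, a forbidden distance reappears, and $|X|\le 7$. (A direct enumeration of $8$-subsets up to the order-$120$ symmetry group would serve as an alternative, computational, verification.) Combining the cases leaves $T=\{2,3,5\}$ with $X$ a cube as the only possibility.
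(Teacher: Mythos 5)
Your route is genuinely different from the paper's, which simply determines $\alpha(\mathfrak{G}_2)=6$, $\alpha(\mathfrak{G}_3)=5$, $\alpha(\mathfrak{G}_{1,5})=\alpha(\mathfrak{G}_{4,5})=7$ and $\alpha(\mathfrak{G}_{1,4})=8$ (the last three by computer) and reads off the cube, and its first two steps are sound: the ratio bound does eliminate exactly the seven excluded pairs meeting $\{2,3\}$ (all bounds lie strictly below $8$), it is tight ($=8$) for the pair $\{1,4\}$, it gives $4\sqrt{5}\approx 8.94$ for $\{1,5\}$ and $\{4,5\}$, and the $\hat{\Phi}$ reduction between $\{1,2,3\}$ and $\{2,3,4\}$ is valid. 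However, the step you yourself call the crux rests on a false claim, with the parity logic inverted. For $T=\{2,3,4\}$ (excluded pair $\{1,5\}$), two representatives over Petersen-adjacent fibres sit at dodecahedral relation $1$ or $4$, so excluding $d_1$ forces relation $4$, i.e.\ \emph{non-adjacent} lifts: this is a section of the voltage-\emph{flipped} cover, not of the cover itself, and since $5$ is odd, the solvability condition on a surviving $5$-cycle is that its \emph{original} voltage be \emph{non-trivial}. Thus non-trivial voltages on surviving $5$-cycles would permit, not forbid, such a configuration. Worse, the claim that all surviving $5$-cycles have non-trivial voltage is false: the twelve $5$-cycles of Petersen split into six face-projections (trivial voltage) and six others (non-trivial); each vertex lies on $3+3$ of the two types, each edge on $2+2$, each distance-$2$ pair on $1+1$, so by inclusion--exclusion the deletion of any two vertices leaves $2+2$ (adjacent pair) or $1+1$ (non-adjacent pair) surviving $5$-cycles of the two types. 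Your desired conclusion is true, but for the opposite reason: the surviving \emph{face} cycle obstructs $T=\{2,3,4\}$, while the surviving \emph{non-face} cycle obstructs $T=\{1,2,3\}$. As written, your argument would fail.

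A second, smaller gap: in the tight case $T=\{2,3,5\}$ you never actually derive the cube; Hoffman equality plus ``$X$ is a maximum independent set of $\mathfrak{G}$'' still requires classifying those maximum independent sets, which you do not do. Both gaps can be closed at once by using the fibre picture more forcefully: when both $d_1$ and $d_4$ are excluded, no two points of $X$ can lie over adjacent Petersen vertices at all, so $X$ projects to an independent set of the Petersen graph; since $\alpha(\mathrm{Petersen})=4$ and each fibre has size $2$, the $8$ points of $X$ must form the full preimage of one of the five maximum independent sets of Petersen, and these preimages are exactly the five inscribed cubes (each cube is antipodally closed and projects onto such a set, and both families have five members).
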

\begin{proof}
Let $\mathfrak{G}$ be the dodecahedron graph with relations $R_i=\{(x,y) \in V \times V \mid d(x,y)=d_i\}$.  
We define the graphs $\mathfrak{G}_i=(V,R_i)$ and $\mathfrak{G}_{i,j}=(V, R_i \cup R_j)$.  
If for given $i$, the independence number $\alpha(\mathfrak{G}_{i,j})$ is less than $8$ for each $j\ne i$, then we should take the distance $d_i$ for a 8-point subset. 
We can determine $\alpha(\mathfrak{G}_2) =6$ and $\alpha(\mathfrak{G}_3)=5$. This implies $\alpha(\mathfrak{G}_{2,i})\leq 6 <8$ and $\alpha(\mathfrak{G}_{3,i})\leq 5<8$ for each $i=1,4,5$. Thus $X$ has both $d_2$ and $d_3$. 
Moreover, we can determine $\alpha(\mathfrak{G}_{1,5})=\alpha(\mathfrak{G}_{4,5})=7$ and $\alpha(\mathfrak{G}_{1,4})=8$, which are calculated by a computer aid. Therefore the distances of $X$ are $d_2$, $d_3$ and $d_5$. The set $X$ corresponding to $\alpha(\mathfrak{G}_{1,4})$ is the cube.  
\end{proof}

\begin{lemma}\label{lem:SubDodeca4ds}
There exist exactly $116$ of $4$-distance subsets of a regular dodecahedron with $8$ points up to isomorphism. 
\end{lemma}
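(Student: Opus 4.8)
The plan is to stratify the $8$-point $4$-distance subsets $X\subseteq\mathcal{D}_{20}$ according to the single distance they omit. Since such an $X$ realizes exactly four of $d_1,\dots,d_5$, it omits exactly one of them; and the bounds $\alpha(\mathfrak{G}_2)=6$, $\alpha(\mathfrak{G}_3)=5$ from the proof of Lemma~\ref{lem:SubDodeca3ds} force every $8$-point subset to realize $d_2$ and $d_3$. Hence the omitted distance lies in $\{d_1,d_4,d_5\}$, and $X$ is an $8$-point independent set of exactly one of $\mathfrak{G}_1$, $\mathfrak{G}_4$, $\mathfrak{G}_5$ (it uses the other two of $d_1,d_4,d_5$, so it cannot be independent in a second one). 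The three families are therefore pairwise disjoint, and we would prove the total $116$ as a sum $N_1+N_4+N_5$, where $N_k$ counts the isomorphism classes omitting $d_k$.

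Within each case we would generate every $8$-point independent set of $\mathfrak{G}_k$ by backtracking on the $20$ vertices and then reduce modulo the order-$120$ symmetry group of the dodecahedron, recording a canonical form per orbit. The case $k=5$ is the most transparent: the dodecahedron is centrally symmetric and its antipodal pairs realize precisely the diameter $d_5=\sqrt{3}\tau$ (the cosine $-1$ entry of $Q$), so $\mathfrak{G}_5$ is a perfect matching on the ten antipodal pairs, and its $8$-point independent sets are obtained by deleting two pairs and choosing one vertex from each of the remaining eight. Before counting we would discard the degenerate members that are not genuinely $4$-distance: by Lemma~\ref{lem:SubDodeca3ds} the only $8$-point subset with fewer than four distances is the cube, with distances $d_2,d_3,d_5$; it occurs among the independent sets of both $\mathfrak{G}_1$ and $\mathfrak{G}_4$ and must be removed there, whereas no independent set of $\mathfrak{G}_5$ is the cube since the cube realizes $d_5$. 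Summing the three filtered counts should yield $116$.

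The weight of the argument falls in two places. The first is computational but delicate: distance-set isomorphism (similarity in $\mathbb{R}^3$) is coarser than equivalence under dodecahedral symmetry, so after forming symmetry orbits we must attach to each a complete distance-set invariant and merge orbits sharing it, or a careless tally will overcount. The second, and the real conceptual check, is the automorphism $\hat{\Phi}$ of Lemma~\ref{lem:structures}: reading the cosines off $Q$ shows that $\hat{\Phi}$ fixes $d_2,d_3,d_5$ and interchanges the roles of $d_1$ and $d_4$, so it carries the $\mathfrak{G}_1$-family bijectively onto the $\mathfrak{G}_4$-family while preserving colorings, and it stabilizes the $\mathfrak{G}_5$-family. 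This both supplies a consistency test relating $N_1$ and $N_4$ and explains why a single coloring can arise from two genuinely different distance assignments --- exactly the distinction one must not collapse when passing from colorings to distance sets. The main obstacle is thus not any isolated inequality but ensuring that this isomorphism bookkeeping across the three families is simultaneously complete and non-redundant, using $\hat{\Phi}$ to cross-check the totals while keeping $\hat{\Phi}$-paired configurations distinct as distance sets.
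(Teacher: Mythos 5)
Your plan is correct in outline and would arrive at $116$, but it is a genuinely different proof from the paper's, and it delegates to the computer precisely the two facts that the paper establishes by hand. First, the paper's opening observation is that an $8$-point $4$-distance subset can never contain an antipodal pair $\{x,-x\}$: by Thales, the distances from $x$ and $-x$ to any third vertex pair up as $\{d_1,d_4\}$ or $\{d_2,d_3\}$, which forces five distances (or a set with at most three, i.e.\ the cube). Hence every $8$-point $4$-distance subset omits $d_5$, so in your stratification $N_1=N_4=0$: an $8$-point independent set of $\mathfrak{G}_1$ (or $\mathfrak{G}_4$) that is not $4$-distance must by Lemma~\ref{lem:SubDodeca3ds} be one of the five inscribed cubes, which you rightly discard, and your proposed $\hat{\Phi}$-consistency check between $N_1$ and $N_4$ is the vacuous $0=0$. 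All the content sits in your $\mathfrak{G}_5$ case, whose description (delete two antipodal pairs, take a transversal of the remaining eight) is exactly the paper's parametrization, giving $\binom{10}{2}\cdot 2^8=11520$ sets, all automatically $4$-distance by Lemma~\ref{lem:SubDodeca3ds}. Second, where you plan to form symmetry orbits and then merge orbits that happen to be similar via a complete invariant, the paper proves that no merging can occur: removing two antipodal pairs cannot destroy all twelve faces, so $\pm X$ always contains a whole face; an element of $O(3)$ carrying $X$ to $Y$ carries $\pm X$ to $\pm Y$, hence a face to a face, hence is an isometry of the dodecahedron itself. This identifies similarity classes with orbits of the order-$120$ group, after which the paper obtains $116$ by a hand Burnside computation, $\frac{1}{120}\left(11520+15\cdot 16+15\cdot 144\right)=116$, with no enumeration at all. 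What your route buys is mechanical uniformity: backtracking, canonical forms, and invariant-based merging verify the antipodal exclusion, the emptiness of two of your families, the absence of cross-family isomorphisms, and the orbit-versus-similarity coincidence implicitly, without proving any of them. What the paper's route buys is a human-checkable count and the structural facts themselves, which is why its proof is short; if you execute your plan, you should at least record these facts as outputs of the computation rather than leave them as unexamined bookkeeping.
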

\begin{proof}
An 8-point 4-distance subset of a regular dodecahedron does not contain an antipodal pair $\{x,-x\}$, otherwise $X$ is not 4-distance. 
A regular dodecahedron has only 10 antipodal pairs. 
We choose 8 antipodal pairs from the 10 pairs, and pick out one point from each antipodal pair, then an 8-point 4-distance set is obtained. Every 8-point 4-distance subset of a regular dodecahedron is obtained by this manner. 

First we prove that 
if two 8-point 4-distance sets $X$ and $Y$ are isomorphic, then $X$ and $Y$ are in the same orbit of the isometry group of a regular dodecahedron. 
Since the sets $X$ and $Y$ are in the same sphere, there exists an isometry $\sigma$ in the orthogonal group $O(3)$ such that $X^\sigma=Y$. 
This implies that $(\pm X)^{\sigma}= \pm Y$, namely the set of 8 antipodal pairs of $\pm X$ are isomorphic to that of $\pm Y$. 
Since a regular dodecahedron in a given sphere is uniquely determined after one  face is fixed, 
if each set of 8 antipodal pairs makes a face of the dodecahedron, then $\sigma$ becomes an isometry of the dodecahedron. 
In order to prove that each set of 8 antipodal pairs makes a face of the dodecahedron, we prove that it is impossible to break all the faces of a regular dodecahedron by removing 2 antipodal pairs.  
If we remove an antipodal pair, then 6 faces are broken. 
By removing one more antipodal pair, we would like to break the remaining 6 faces, but it is impossible. Therefore, 
each set of 8 antipodal pairs contains a face of the dodecahedron, and $\sigma$ becomes an isometry of the dodecahedron. 

We can determine the number of the 4-distance sets up to isomorphism by Burnside's lemma. The isometry group ${\rm Aut}(\mathcal{D}_{20})$ of a regular dodecahedron is a subgroup of a symmetric group $S_{20}$ on the 20 vertices, which is isomorphic to $A_5 \times C_2$. 
The vertices are indexed as Figure \ref{fig:Dodeca}. 
Let $N_{\sigma}$ denote the number of 
the 4-distance sets fixed by $\sigma \in {\rm Aut}(\Gamma)$. 
For each $\sigma \in {\rm Aut}(\Gamma)$, we determine the number $N_{\sigma}$. 

  \begin{figure}
   \begin{center}
        \includegraphics[width=60mm]{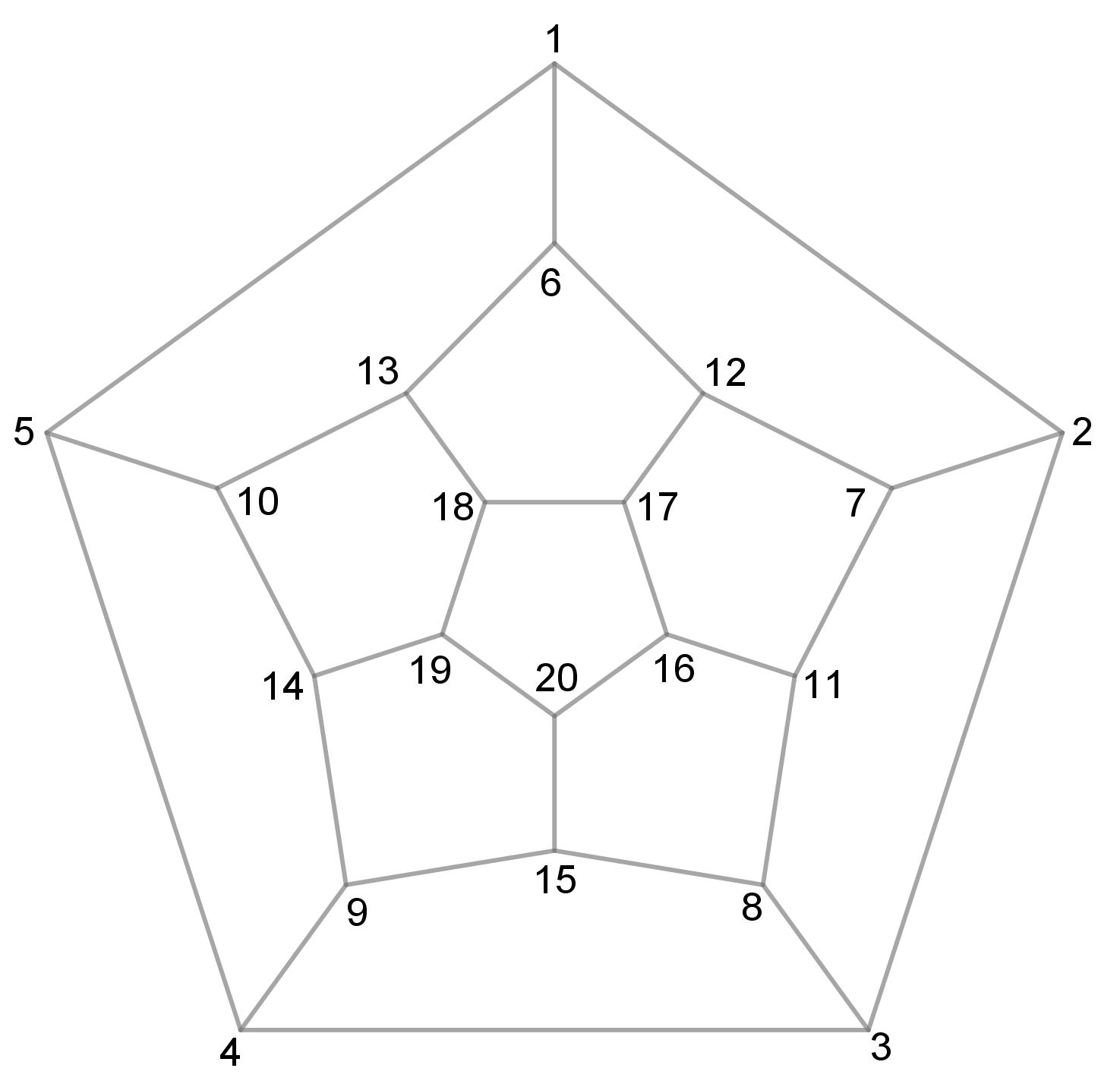}
    \end{center}
  \caption{Dodecahedron graph}
  \label{fig:Dodeca}
\end{figure}

The identity $e$ fixes all the 4-distance sets, namely $N_{e}=\binom{10}{2}\cdot 2^8=11520$. 

The transformations that fix a face of the dodecahedron are conjugates of 
\[\sigma_1=(1\,2\, 3\, 4\, 5)(6\,7\, 8\, 9\, 10)(11\,15\, 14\, 13\, 12)(16\,20\, 19\, 18\, 17),\]
$\sigma_1^2$, $\sigma_1^3$, or $\sigma_1^4$. 
The number of the transformations is 24. 
The size of a subset fixed by $\sigma_1$  is divisible by 5. Thus $N_{\sigma_1}=0$, and similarly $N_\sigma=0$ for any transformation $\sigma$ in this case. 

The transformations that fix an edge of the dodecahedron are conjugates of 
\[\sigma_2 =(1\,2)(3\,6)(5\,7)(4\,12)(10\,11)(8\,13)(9\,17)(14\,16)(15\,18)(19\,20).\]  
The number of the transformations is 15. 
A 4-distance set fixed by $\sigma_2$ contains 
one of $ \{1,2 \}$ and $\{19,20 \}$, 
one of $ \{3,6 \}$ and  $\{15,18 \}$,
one of $ \{5,7 \}$ and  $\{14,16 \}$, and 
one of $ \{4,12 \}$ and $\{9,17 \}$.  
 This implies that $N_{\sigma_2}=2^4=16$, and similarly $N_\sigma=16$ for any transformation $\sigma$ in this case.

The transformations that fix a vertex of the dodecahedron are conjugates of 
\[\sigma_3=(2\, 5\, 6)(3\, 10\, 12)(4\, 13\, 7)(8\, 14\, 17)(9\, 18\, 11)(15\, 19\, 16) \]
or $\sigma_3^2$.
The number of the transformations is 20. 
The size of a subset fixed by $\sigma_3$  is congruent to 0 or 1 modulo 3. Thus $N_{\sigma_3}=0$, and similarly $N_\sigma=0$ for any transformation $\sigma$ in this case. 

Let $\tau$ be the transformation such that $\tau(x)=-x$ for any vertex $x$, namely
\[
\tau=(1\,20)(2\,19)(3\,18)(4\,17)(5\,16)(6\,15)(7\,14)(8\,13)(9\,12)(10\,11). 
\]
Clearly $N_\tau=0$. 

We consider the transformations that are conjugates of  
\[\tau \sigma_1=(1\,19\,3\,17\,5\,20\,2\,18\,4\,16)(6\,14\,8\,12\,10\,15\,7\,13\,9\,11),\]
$\tau \sigma_1^2$, $\tau \sigma_1^3$, or $\tau \sigma_1^4$.
The number of the transformations is 24. 
The size of a subset fixed by $\tau\sigma_1$  is divisible by 10. Thus $N_{\tau \sigma_1}=0$, and similarly $N_\sigma=0$ for any transformation $\sigma$ in this case. 

We consider the transformations that are conjugates of  
\[\tau \sigma_2=(1\,19)(2\,20)(3\,15)(4\,9)(5\,14)(6\,18)(7\,16)(12\,17).\]  
The number of the transformations is 15. 
A 4-distance set fixed by $\tau \sigma_2$ may contain
one of $\{1,19\}$ and $\{2,20\}$, 
one of $ \{3,15 \}$ and $\{6,18 \}$,
one of $ \{5,14 \}$ and $\{7,16 \}$,  
one of $ \{4,9 \}$ and $\{12,17 \}$, 
one of $8$ and $13$, or one of $10$ and $11$.  
 This implies that $N_{\tau \sigma_2}=2^4+\binom{4}{3} 2^5=144$, and similarly $N_\sigma=144$ for any transformation $\sigma$ in this case. 
 
 We consider the transformations that are conjugates of  
\[\tau \sigma_3=(1\, 20)(2\,16\,6\,19\,5\,15)(3\,11\,12\,18\,10\,9)(4\,8\,7\,17\,13\,14)\]
or $\tau\sigma_3^2$. 
The number of the transformations are 20. 
A subset fixed by $\tau \sigma_3$ must contain $-x$ for its point $x$. 
Thus $N_{\tau \sigma_3}=0$, and similarly $N_\sigma=0$ for any transformation $\sigma$ in this case. 
 
By Burnside's lemma, the number of 8-point 4-distance subsets of the dodecahedron is 
\[
\frac{1}{|{\rm Aut}(\Gamma)|}\sum_{\sigma \in {\rm Aut}(\Gamma)} N_{\sigma}=
\frac{1}{120}(1\cdot 11520+24 \cdot 0+15\cdot 16+ 20 \cdot 0+ 
1\cdot 0+24 \cdot 0+15\cdot 144+ 20 \cdot 0)=116.  \qedhere
\]
\end{proof}

\section{Diameter graphs and their independence numbers} 

We denote a path and a cycle with $n$ vertices by $P_n$ and $C_n$, respectively. 
We denote a complete graph of order $n$ by $K_n$.
For $X\subset \mathbb{R}^d$, the {\it diameter} of $X$ is defined to be the maximum value of $A(X)$.  
Diameters give us important information when we study distance sets 
especially in few dimensional space. 
The {\it diameter graph} $DG(X)$ of $X\subset \mathbb{R}^d$ is the graph 
with $X$ as its vertices and where two vertices $p, q\in X$ are 
adjacent if $d(p,q)$ is the diameter of $X$. 
Let $R_n$ be the set of the vertices of a regular $n$-gon. 
Clearly $DG(R_{2n+1})=C_{2n+1}$ and $DG(R_{2n})=n\cdot P_2$. 
Note that if the independence number $\alpha (DG(X))=n'$ for an $s$-distance set $X$, 
then the subset of $X$ corresponding to an independence set of order $n'$ is an  $s'$-distance set for some $s'<s$. 

For diameter graphs for $\mathbb{R}^2$, we have the following propositions \cite{S08}. 

\begin{proposition}\label{prop:DGraphR2} 
Let $G=DG(X)$ for $X\subset \mathbb{R}^2$. Then 
\begin{itemize}
\item[{\rm (1)}] $G$ contains no $C_{2k}$ for any $k\geq 2$; 
\item[{\rm (2)}] if $G$ contains $C_{2k+1}$, 
then any two vertices in $V(G)\setminus V(C_{2k+1})$ are not adjacent and 
every vertex not in the cycle is adjacent to at most one vertex of the cycle. 
\end{itemize}
Moreover, $G$ contains at most one cycle. 
\end{proposition}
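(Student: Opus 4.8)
The plan is to show that the diameter graph $G=DG(X)$, drawn with its defining segments, is a \emph{straight-line thrackle}: any two of its edges either share an endpoint or cross. The engine is the elementary lemma that two diameter segments must meet. Concretely, if $ab$ and $cd$ are edges with $\{a,b\}\cap\{c,d\}=\emptyset$ and the segments were disjoint, then $a,b,c,d$ would be in convex position (an interior point of a triangle cannot be at maximal distance from a vertex) with $ab$ and $cd$ a pair of opposite sides; since the sum of the two diagonals of a convex quadrilateral strictly exceeds the sum of either pair of opposite sides, one diagonal would be longer than the diameter — a contradiction. Hence any two vertex-disjoint edges of $G$ cross; call this the \emph{crossing lemma}. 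I also record that every endpoint of a diameter is a vertex of $\mathrm{conv}(X)$, so the vertices of any subgraph made of diameter endpoints lie in convex position.

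For (1), suppose $G$ contains an even cycle $C_{2k}=v_1\cdots v_{2k}$. Its vertices are in convex position; list them by their circular order on $\mathrm{conv}(X)$ as positions $1,\dots,2k$, so that the cycle becomes a Hamiltonian cycle on these convex points and, by the crossing lemma, every two vertex-disjoint edges alternate around the circle. Fix an edge $e=\{a,b\}$: deleting its (adjacent) endpoints $a,b$ from the cycle leaves a path on the remaining $2k-2$ points whose every edge is vertex-disjoint from $e$ and must therefore straddle the chord $ab$. A path all of whose edges cross a fixed chord alternates between the two open arcs cut off by that chord, so those arcs have sizes differing by at most $1$; as they sum to the even number $2k-2$, each has exactly $k-1$ points and $a,b$ are antipodal (circular distance $k$). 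Since this must hold for \emph{every} edge, all $2k$ edges join antipodal positions — but there are only $k$ antipodal pairs among $2k$ positions, so $2k$ distinct edges cannot all be antipodal, a contradiction. Hence $G$ has no even cycle, and every cycle of $G$ is odd. I expect this parity computation to be the main obstacle: it is the one genuinely nonroutine step, and it carries the mild genericity caveat that disjoint diameter segments meet in a transversal interior crossing (no overlaps), which follows from convex position.

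Everything else reduces to ``creating a forbidden even cycle.'' Let $C=C_{2k+1}$ be an odd cycle. If an outside vertex $w$ were adjacent to two cycle vertices $v_i,v_j$, the two $C$-arcs between them have lengths of opposite parity, so appending $w$ to the even arc yields an even cycle of length $\ge 4$, contradicting (1); thus $w$ meets $C$ at most once. Likewise a chord joining two nonadjacent cycle vertices would close an even cycle with the odd arc, so no chords exist. For the nonadjacency of outside vertices, view $C$ as a closed polygonal curve: a generic line meets it transversally an even number of times and crosses each of its $2k+1$ edges at most once, hence at most $2k$ of them. But an edge $ww'$ with both endpoints outside $C$ is vertex-disjoint from every edge of $C$, so by the crossing lemma it would have to cross all $2k+1$ — impossible. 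Therefore outside vertices are mutually nonadjacent, proving (2); the graph is then exactly $C$ with pendant vertices, and since degree-$1$ vertices lie on no cycle and there are no chords, $C$ is the only cycle, giving the final ``at most one cycle'' assertion. The degenerate cases — lines through vertices, or edges parallel to others — are dispatched by a routine perturbation or limiting argument.
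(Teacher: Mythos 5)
Your proof is correct and rests on exactly the ingredient the paper itself invokes: the paper offers no detailed argument for this proposition, merely remarking that it is ``implied from the fact that two segments with the diameter must cross if they do not share an end point'' and deferring to \cite{S08}. Your elaboration of that crossing fact --- convex position of diameter endpoints, the antipodal-pair count that rules out even cycles, and the parity (2-coloring) argument for an edge disjoint from an odd cycle --- is sound, and the degeneracy caveats you flag are genuinely harmless since no three of the relevant points can be collinear.
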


\begin{proposition}\label{independent} 
Let $G=DG(X)$ be the diameter graph of $X\subset \mathbb{R}^2$ with $|X|=n$. 
If $G\ne C_{n}$, then we have $\alpha(G)\geq \lceil \frac{n}{2} \rceil$. 
\end{proposition}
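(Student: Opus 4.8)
The plan is to exploit the strong structural restrictions on $G$ furnished by Proposition~\ref{prop:DGraphR2}. First I would record that, since $G$ contains no even cycle $C_{2k}$ with $k\ge 2$ and contains at most one cycle in total, the graph $G$ is either a forest or has a unique cycle, which is then necessarily an odd cycle $C_{2k+1}$. In the forest case $G$ is bipartite, so the larger of the two color classes is an independent set of size at least $\lceil n/2\rceil$, and we are done immediately. The whole difficulty therefore lies in the case where $G$ has a unique odd cycle.

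So suppose $G$ has a unique cycle $C=C_{2k+1}$. By part (2) of Proposition~\ref{prop:DGraphR2}, the vertices off $C$ form an independent set and each of them is adjacent to at most one vertex of $C$; combined with the uniqueness of the cycle, this forces every off-cycle vertex to have degree at most $1$. Hence $G$ is exactly the cycle $C_{2k+1}$ together with $b$ pendant vertices (each joined to a single cycle vertex) and $a$ isolated vertices, where $a+b\ge 1$ because $G\ne C_n$, and $n=2k+1+a+b$.

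Next I would build an explicit large independent set. When $b=0$ the graph is $C_{2k+1}$ plus $a\ge 1$ isolated vertices, so $\alpha(G)=k+a\ge k+\lceil (a+1)/2\rceil=\lceil n/2\rceil$ and we are done. When $b\ge 1$, let $S$ be the set of cycle vertices carrying at least one pendant and put $s=|S|$; since the $b$ pendants are distributed among the $s$ vertices of $S$, we have $1\le s\le b$. I would take the independent set consisting of all $a$ isolated vertices, all $b$ pendants, and a maximum independent set of $C-S$ (the cycle with the vertices of $S$ deleted). These choices are mutually compatible, since the pendants are pairwise nonadjacent and their unique neighbors lie in $S$, which is excluded. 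Removing the $s\ge 1$ vertices of $S$ breaks the cycle into a disjoint union of paths on $2k+1-s$ vertices, which is a forest, so $\alpha(C-S)\ge \lceil (2k+1-s)/2\rceil$. Collecting terms and using $s\le b$ together with elementary floor/ceiling estimates then yields an independent set of size at least $\lceil n/2\rceil$.

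The main obstacle is exactly this odd-cycle case: deleting a single vertex of $C$ already turns $G$ into a bipartite graph, but that only gives the slightly-too-weak bound $\lceil (n-1)/2\rceil$, which falls short by one unit when $n$ is odd. The quantitative input that recovers the missing unit is the inequality $s\le b$ — there cannot be more pendant-bearing cycle vertices than pendants — which precisely compensates for the loss in the cycle's independence number caused by deleting $S$. The remaining work is routine parity bookkeeping with floors and ceilings, which I would carry out at the very end.
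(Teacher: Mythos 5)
Your proof is correct, but it is genuinely different from what the paper does: the paper never writes out a proof of Proposition~\ref{independent} at all --- it attributes both Propositions~\ref{prop:DGraphR2} and \ref{independent} to \cite{S08} and adds only the one-sentence geometric justification that two diameter segments must cross unless they share an endpoint. Your argument instead derives Proposition~\ref{independent} purely combinatorially from Proposition~\ref{prop:DGraphR2}: no even cycles and at most one cycle means $G$ is either a forest (bipartite, done) or a unique odd cycle $C_{2k+1}$ decorated with $b$ pendant vertices and $a$ isolated vertices, and in the latter case your inequality $s\le b$ exactly repairs the deficit caused by deleting the pendant-bearing cycle vertices $S$. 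The bookkeeping you deferred does close: deleting $S$ leaves paths on $2k+1-s$ vertices, so your independent set has size at least $a+b+k-\lfloor (s-1)/2\rfloor$, and using $s\le b$ together with $b-\lfloor (b-1)/2\rfloor=\lceil (b+1)/2\rceil$, the required bound reduces to $a+\lceil (b+1)/2\rceil\ge\lceil (a+b+1)/2\rceil$, which follows from subadditivity of the ceiling, namely $\lceil (a+b+1)/2\rceil\le\lceil a/2\rceil+\lceil (b+1)/2\rceil\le a+\lceil (b+1)/2\rceil$; the pendant-free case likewise needs only $a\ge\lceil (a+1)/2\rceil$, true since $a\ge 1$ there. (One small simplification: part (2) of Proposition~\ref{prop:DGraphR2} by itself already forces every off-cycle vertex to have degree at most one; uniqueness of the cycle is not needed for that step.) What your route buys is self-containedness --- once Proposition~\ref{prop:DGraphR2} is granted, no further appeal to the planar geometry is required --- whereas the paper's implicit geometric route is shorter and handles both propositions at once via the crossing property, but leaves all combinatorial details to the reader and to \cite{S08}.
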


Propositions \ref{prop:DGraphR2} and \ref{independent}  are implied from the fact that two segments with the diameter 
must cross if they do not share an end point. 

For the diameter graphs of sets in $\mathbb{R}^3$, 
Dol'nikov \cite{Dol} proved the following theorem. 
This theorem plays a key role of the proof of Theorem~\ref{thm:sub}. 

\begin{theorem}[Dol'nikov] \label{graph}
Let $G=DG(X)$ be the diameter graph of $X\subset \mathbb{R}^3$. 
If $G$ contains two cycles with odd lengths, then they have a common vertex. 
\end{theorem}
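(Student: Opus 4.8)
The plan is to argue by contradiction, exploiting a strong one-sidedness constraint that diameter edges impose at each vertex. Normalize so that the diameter of $X$ equals $1$, and suppose $G=DG(X)$ contained two \emph{vertex-disjoint} odd cycles $C$ and $C'$; by definition every edge of $C\cup C'$ is then a genuine diameter, i.e.\ a unit segment both of whose endpoints realize the maximum distance. First I would record the elementary local lemma: if $u$ and $w$ are both neighbors of a vertex $v$, then $|vu|=|vw|=1$ while $|uw|\le 1$, so the law of cosines forces $\angle uvw\le 60^{\circ}$; hence the unit directions $e_u=(u-v)/|u-v|$ for $u\in N(v)$ have pairwise inner product $e_u\cdot e_w\ge 1/2$.

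From this I would derive a clean hemisphere statement. The origin cannot lie in the convex hull of these directions, since for nonnegative weights $\lambda_i$ with $\sum_i\lambda_i=1$ one has $|\sum_i\lambda_i e_i|^2=\sum_i\lambda_i^2+\sum_{i\ne j}\lambda_i\lambda_j(e_i\cdot e_j)\ge \tfrac12\sum_i\lambda_i^2+\tfrac12\ge\tfrac12>0$. A separating hyperplane then yields, for each $v$, a unit vector $n_v$ with $(u-v)\cdot n_v>0$ for every neighbor $u$: all diameters leaving $v$ point strictly into one open half-space. This one-sidedness is the three-dimensional replacement for the planar fact (underlying Proposition~\ref{prop:DGraphR2}) that two diameters must cross.

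Next I would analyze the interaction of two disjoint diameters. Given diameter edges $ab$ and $cd$ with the four endpoints distinct, the tetrahedron $abcd$ has $ab$ and $cd$ as its two longest, opposite edges. A short computation with the common perpendicular of the lines through $ab$ and through $cd$ shows its feet lie in the relative interiors of both segments, so the orthogonal projections of the two segments onto the plane perpendicular to that common perpendicular must cross. Thus disjoint diameters in $\mathbb{R}^3$ always interlock in projection even though they need not meet, which is the geometric ingredient that replaces ``crossing'' by ``linking.''

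Finally I would combine these with the parity of the cycles. Orienting $C=v_1v_2\cdots v_{2k+1}$, its directed edges $e_i=v_{i+1}-v_i$ are unit vectors with $\sum_i e_i=0$ (telescoping) and $e_i\cdot e_{i+1}\le -1/2$ (from $|v_{i+2}-v_i|\le 1$), so the closed spherical polygon they trace on $S^2$ is a balanced zig-zag whose \emph{odd} number of large-angle turns forces it to wind an odd number of times. I would then attach to the pair $(C,C')$ a linking invariant modulo $2$, built from the interlocking lemma above, and show that oddness of both cycles forces this invariant to equal $1$, whereas two vertex-disjoint families of segments that pairwise fail to meet in $\mathbb{R}^3$ can be placed in general position with linking number $0$ --- the contradiction, which also explains why the disjoint \emph{even} cycles $DG(R_{2n})=n\cdot P_2$ are not excluded. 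I expect the crux, and the main obstacle, to be exactly this global step: making the mod-$2$ linking invariant well defined when different edge-pairs have different common-perpendicular directions, proving that odd length forces it to be nonzero, and disposing of the degenerate configurations in which diameter segments are parallel or share interior points. The hemisphere and interlocking lemmas are routine; the global parity bookkeeping is where the real work lies.
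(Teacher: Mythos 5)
Your proposal is not a complete proof, and you concede as much: the ``global parity bookkeeping'' you defer to the end is precisely the content of Dol'nikov's theorem, while the two lemmas you actually establish are the easy part. (For context: the paper itself gives no proof of this statement either --- it is imported from Dol'nikov \cite{Dol} --- so your attempt has to stand entirely on its own.) Your hemisphere lemma is correct: neighbors of $v$ satisfy $e_u\cdot e_w\ge 1/2$, the convex-combination estimate is right, and a separating hyperplane exists. Your interlocking claim --- that for two vertex-disjoint diameter edges the feet of the common perpendicular lie in the relative interiors of both segments --- is in fact true, but it is not the ``short computation'' you assert: one must genuinely rule out a foot lying outside a segment, e.g.\ by writing the cross-distances as a quadratic form $(1-C)\tfrac{(t+\mu)^2}{2}+(1+C)\tfrac{(t-\mu)^2}{2}+h^2\le 1$ at the four corner parameter pairs and extracting $t_1+\mu_1\le 0$; and the coplanar and parallel cases need separate treatment. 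Still, both lemmas together get you nowhere near the conclusion.

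The final step would fail even if you attempted to carry it out as described. First, the mod-$2$ linking number of the two cycle curves is an invariant of the \emph{given} configuration; the claim that vertex-disjoint families of non-meeting segments ``can be placed in general position with linking number $0$'' is not an argument --- you cannot move the cycles, and two disjoint closed polygons in $\mathbb{R}^3$ can perfectly well be linked (the Hopf link). So even a successful proof that odd cycles force odd linking would yield no contradiction without a second, geometric theorem saying these particular curves \emph{cannot} be linked, which you never identify. Second, the interlocking lemma cannot feed a linking computation: knowing which pairs of projected edges cross only counts total crossings, and for two disjoint closed curves that total is always even; linking parity is the number of \emph{over}-crossings, which needs over/under data your lemmas do not supply. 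Third, edges of the two cycles may intersect at interior points (vertex-disjointness does not forbid this), in which case linking is undefined; this is a real configuration, not a disposable degeneracy. For the record, the known proofs (Dol'nikov \cite{Dol}; see also Kupavskii and Polyanskii's work on Schur's conjecture) do exploit the one genuine parity fact you brush against --- $e_i\cdot e_{i+1}\le -1/2$ forces the loop of edge directions of an odd cycle to lift to a path from $e_1$ to $-e_1$, i.e.\ a non-contractible loop in the projective plane --- and then combine the topological fact that two non-contractible loops in $\mathbb{RP}^2$ must intersect with a geometric lemma excluding a common (or antipodal) direction for two vertex-disjoint cycles. That route is substantially different from, and harder than, the linking-number scheme you outline.
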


In particular, we have the following corollary. 
\begin{corollary}\label{cycle}
Let $G=DG(X)$ be the diameter graph of $X\subset \mathbb{R}^3$ with $|X|=n$. 
If $G$ contains an odd cycle $C$ with length $m$, then $\alpha (G)\geq \lceil \frac{n-m}{2} \rceil$. 
\end{corollary}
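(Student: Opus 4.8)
The plan is to delete the vertices of the odd cycle $C$ and argue that what remains is bipartite, so that a large independent set appears for free. Write $G' = G - V(C)$ for the subgraph of $G$ induced on the $n-m$ vertices lying outside $C$.

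The key step will be to show that $G'$ contains no odd cycle. I would argue by contradiction: suppose $G'$ contained an odd cycle $C'$. Since $V(C') \subseteq V(G) \setminus V(C)$ by construction, the cycles $C$ and $C'$ would be two odd cycles of $G$ sharing no common vertex. This contradicts Theorem~\ref{graph}, which forces any two odd cycles in the diameter graph of a set in $\mathbb{R}^3$ to meet in a vertex. Hence $G'$ has no odd cycle and is therefore bipartite.

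Once $G'$ is known to be bipartite, the conclusion is immediate. Fixing a bipartition $V(G') = A \sqcup B$, each of $A$ and $B$ is an independent set of $G'$, and the larger of the two has cardinality at least $\lceil |V(G')|/2 \rceil = \lceil (n-m)/2 \rceil$. Because $G'$ is an induced subgraph of $G$, any independent set of $G'$ is also an independent set of $G$, so $\alpha(G) \geq \lceil (n-m)/2 \rceil$, as claimed.

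The only substantive content lies in the first step, and it rests entirely on Dol'nikov's theorem; after that, bipartiteness and the resulting bound are routine. The point to handle carefully is that $C'$ must be vertex-disjoint from $C$, which is exactly what placing $C'$ inside the induced subgraph on $V(G)\setminus V(C)$ guarantees, so that the hypothesis of Theorem~\ref{graph} is genuinely violated.
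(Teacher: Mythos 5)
Your proof is correct and is essentially the paper's own argument: delete $V(C)$, invoke Dol'nikov's theorem (Theorem~\ref{graph}) to conclude the remaining induced subgraph has no odd cycle and hence is bipartite, and take the larger side of the bipartition as an independent set of size at least $\lceil (n-m)/2 \rceil$. The only difference is that you spell out the contradiction and the bipartition explicitly, which the paper leaves implicit.
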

\begin{proof}
If we remove the odd cycle $C$ from $G$, then any odd cycle in $G$ is 
broken by Theorem~\ref{graph}. This implies $G-C$ is a bipartite graph. Therefore 
we have $\alpha (G)\ge \alpha(G-C)\ge \lceil \frac{n-m}{2} \rceil$.
\end{proof}

In the remaining of this section, we give a proof of Theorem~\ref{thm:sub}. 

By Corollary~\ref{cycle}, if the diameter graph $G=DG(X)$ of $X\subset \mathbb{R}^3$ with $20$ points contains a $3$-cycle or a $5$-cycle, then $\alpha (G)\ge 8$. 
Let $\mathscr{G}_n$ be the set of all graphs of order $n$ 
which do not contain neither a $3$-cycle nor a $5$-cycle. We define 
\[f(n)=\min \{\alpha (G)\mid G\in \mathscr{G}_n\}.\]

Since $\alpha (C_n)=\lceil n/2\rceil$, we have $f(n)\le \lceil n/2\rceil$. 
For a group $G$ and $S\subset G$, we define the {\it Cayley graph} ${\rm Cay}(G,S)$ as the graph whose 
vertex set is $G$ and two vertices $v ,w \in G$ are adjacent 
if $v^{-1}w\in S$.  
It is easy to see that ${\rm Cay}(\mathbb{Z}_{17},\{\pm 1, \pm 6\})$ does not contain 
neither a $3$-cycle nor a $5$-cycle and 
$\alpha ({\rm Cay}(\mathbb{Z}_{17},\{\pm 1, \pm 6\}))=7$. 
This implies $f(17)\le 7$. 

\begin{lemma}\label{lem:fn}
Let $f(n)$ be defined as above. Then $0 \le f(n+1)-f(n) \le 1$ holds. 
\end{lemma}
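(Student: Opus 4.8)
The plan is to establish the two inequalities $f(n+1)\ge f(n)$ and $f(n+1)\le f(n)+1$ separately, each by a single-vertex modification that stays inside the family $\mathscr{G}$. The two facts I will use repeatedly are that $\mathscr{G}$ is closed under vertex deletion and under adjoining an isolated vertex (neither operation creates a new cycle, hence neither introduces a $3$-cycle or a $5$-cycle), together with the monotonic behaviour of $\alpha$ under these operations.

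For the upper bound $f(n+1)\le f(n)+1$, I would take an extremal graph $G\in\mathscr{G}_n$ with $\alpha(G)=f(n)$ and form $G'$ by adjoining a single isolated vertex $v$. Then $G'\in\mathscr{G}_{n+1}$, and since $v$ is non-adjacent to every other vertex it can be added to any maximum independent set of $G$, giving $\alpha(G')=\alpha(G)+1=f(n)+1$. By the definition of $f(n+1)$ as a minimum, this yields $f(n+1)\le\alpha(G')=f(n)+1$.

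For the lower bound $f(n+1)\ge f(n)$, I would instead start from an extremal graph $G'\in\mathscr{G}_{n+1}$ with $\alpha(G')=f(n+1)$ and delete an arbitrary vertex to obtain the induced subgraph $G=G'-v\in\mathscr{G}_n$. Every independent set of $G$ is also an independent set of $G'$ (adjacency is inherited from $G'$), so $\alpha(G)\le\alpha(G')=f(n+1)$; since $G\in\mathscr{G}_n$, the definition of $f(n)$ gives $f(n)\le\alpha(G)\le f(n+1)$.

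There is no substantial obstacle here: the whole content of the lemma reduces to the inequality $\alpha(G'-v)\le\alpha(G')$ (deleting a vertex cannot raise the independence number) together with the exact identity $\alpha(G\cup\{v\})=\alpha(G)+1$ for an isolated $v$. The only point needing a moment's care is verifying that both modifications keep the graph free of $3$- and $5$-cycles, which is immediate because removing a vertex or adjoining an isolated one introduces no new cycle. Combining the two bounds gives $0\le f(n+1)-f(n)\le 1$.
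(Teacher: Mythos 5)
Your proof is correct and follows essentially the same route as the paper: adjoin an isolated vertex to an extremal graph in $\mathscr{G}_n$ for the upper bound, and delete a vertex from an extremal graph in $\mathscr{G}_{n+1}$ for the lower bound. The only (immaterial) difference is that you delete an arbitrary vertex and invoke $\alpha(G'-v)\le\alpha(G')$, whereas the paper deletes a vertex outside a fixed maximum independent set; both arguments are sound.
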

\begin{proof}
Let $G \in \mathscr{G}_{n}$ be a graph satisfying $\alpha(G)=f(n)$ 
and $G'$ be the graph given by adding one isolated vertex to $G$. 
Then $f(n+1)\le \alpha(G')=\alpha(G)+1\le f(n)+1$. 
Let $G \in \mathscr{G}_{n+1}$ be a graph satisfying $\alpha(G)=f(n+1)$ 
and $H$ be an independent set of $G$ with $|H|=f(n+1)$. 
Let $v\in V(G)\setminus H$. Then $H$ is an independent set of $G-\{v\}$, and $\alpha (G-\{v\})=|H|$. 
Therefore $f(n)\le \alpha (G-\{v\})=|H|=f(n+1)$. 
\end{proof}

For a vertex $v\in V(G)$, $\Gamma _i(v)=\{w\in V(G) \mid \mathfrak{d}(v,w)=i\}$, where $\mathfrak{d}(v,w)$ is the shortest-path distance between $v$ and $w$. 
We abbreviate $\Gamma (v)=\Gamma _1(v)$. 
We define $G_i(v)$ as the induced subgraph with respect to $\Gamma _i(v)$ 
and $k_i(v)=|\Gamma _i(v)|$.  
Let $m$ be a positive integer. 
We denote $\Gamma _m^\ast (v)=\bigcup _{i\ge m}\Gamma_i (v)$ and $k_m^\ast (v)=|\Gamma _m^\ast (v)|$. 
Moreover, we define $G_m^\ast (v)$ as the induced subgraph of $G$ 
with respect to $\Gamma _m^\ast (v)$. 
Note that we regard $\mathfrak{d}(v,w)=\infty$ and $w\in \Gamma _m^\ast (v)$ if there is no path between $v$ and $w$. 
Then the following degree condition holds. 

\begin{lemma}\label{deg_condition}
Let $n$ and $t$ be positive integers. 
Let $G\in \mathscr{G}_n$ and $v\in V(G)$. 
If $\alpha (G)<t<n-k_1(v)+1$, then 
\[k_1(v)+f(n-k_1(v)-t+1)<t.\]
\end{lemma}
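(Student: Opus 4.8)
The plan is to build a large independent set of $G$ by combining the neighborhood $\Gamma(v)$ with an independent set living far from $v$, while controlling the intermediate layer $\Gamma_2(v)$ by means of the excluded cycles. Throughout I write $k_1=k_1(v)$, $k_2=k_2(v)$, and I use the monotonicity of $f$ supplied by Lemma~\ref{lem:fn}. The vertex set decomposes as $V(G)=\{v\}\cup\Gamma(v)\cup\Gamma_2(v)\cup\Gamma_3^\ast(v)$. The absence of a $3$-cycle immediately forces $\Gamma(v)$ to be independent (an edge $ab$ inside $\Gamma(v)$ would give the triangle $v,a,b$) and forbids any edge between $\Gamma(v)$ and $\Gamma_3^\ast(v)$ (such an edge $aw$ would put $w$ at distance $2$ from $v$, contradicting $w\in\Gamma_3^\ast(v)$).

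The step I expect to be the main obstacle, and the geometric heart of the argument, is the claim that $\Gamma_2(v)$ is itself an independent set; this is exactly where the exclusion of $C_5$ enters. Suppose $x,y\in\Gamma_2(v)$ were adjacent. Each of $x$ and $y$ has at least one neighbor in $\Gamma(v)$, say $a$ for $x$ and $b$ for $y$. If $a=b$, then $a,x,y$ form a $3$-cycle; if $a\neq b$, then $v,a,x,y,b$ form a $5$-cycle. Since $G\in\mathscr{G}_n$ excludes both, no edge inside $\Gamma_2(v)$ can exist, proving the claim.

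Granting this, the rest is bookkeeping. Because $v$ has no neighbor in $\Gamma_2(v)$ and $\Gamma_2(v)$ is independent, the set $\{v\}\cup\Gamma_2(v)$ is independent of size $1+k_2$, so $1+k_2\le\alpha(G)<t$ and hence $k_2\le t-2$. Next, the induced subgraph $G_3^\ast(v)$ lies in $\mathscr{G}_{n-1-k_1-k_2}$ and therefore contains an independent set $I$ with $|I|\ge f(n-1-k_1-k_2)$. Since $\Gamma(v)$ is independent and sends no edge into $\Gamma_3^\ast(v)$, the union $\Gamma(v)\cup I$ is an independent set of $G$ of size $k_1+\alpha(G_3^\ast(v))\ge k_1+f(n-1-k_1-k_2)$.

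Finally I will assemble the inequality. From $k_2\le t-2$ I get $n-1-k_1-k_2\ge n-k_1-t+1$, and the hypothesis $t<n-k_1+1$ guarantees that this order is at least $1$, so $f$ is meaningful there and the monotonicity of $f$ yields $f(n-1-k_1-k_2)\ge f(n-k_1-t+1)$. Combining with the previous paragraph gives $k_1+f(n-k_1-t+1)\le\alpha(G)<t$, which is precisely the asserted bound $k_1(v)+f(n-k_1(v)-t+1)<t$. The only delicate point beyond the structural claim about $\Gamma_2(v)$ is checking that all arguments of $f$ stay in its domain, which is ensured by the two strict inequalities hypothesized on $t$.
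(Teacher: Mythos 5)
Your proof is correct and follows essentially the same route as the paper's: you bound $k_2(v)\le t-2$ via the independence of $\{v\}\cup\Gamma_2(v)$, then combine the independent set $\Gamma(v)$ with an independent set of $G_3^\ast(v)$ of size at least $f(k_3^\ast(v))\ge f(n-k_1(v)-t+1)$. The only difference is that you spell out details the paper leaves implicit (the $C_3$/$C_5$ argument for the independence of $\Gamma_2(v)$, and the domain/monotonicity checks for $f$), which is a welcome addition rather than a deviation.
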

\begin{proof}
Since $G\in \mathscr{G}_n$, $\{v\}\cup \Gamma _2(v)$ is an independent set of $G$. 
In particular, we have $k_3^\ast (v)\ge n-k_1(v)-t+1$ since
$1+k_2(v)\le \alpha(G)<t $ and 
$k_2(v)\le t-2$. 
Then 
\[t>\alpha (G)\ge k_1(v)+f(k_3^\ast (v))\ge k_1(v)+f(n-k_1(v)-t+1),\]
since $w_1$ and $w_2$ are not adjacent for any $w_1\in \Gamma_1 (v)$ 
and $w_2\in \Gamma _3^\ast (v)$.  
\end{proof}

For a small integer $n$, we can determine $f(n)$ by using Lemma~\ref{deg_condition}
\begin{lemma}\label{table_f}
We have 
$f(3)=2, f(5)=3, f(8)=4, f(10)=5, f(13)=6$ and $f(17)=7$.
\end{lemma}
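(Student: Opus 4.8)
The plan is to determine the listed values of $f(n)$ by combining the upper bounds already established with matching lower bounds obtained from Lemma~\ref{deg_condition} and the monotonicity Lemma~\ref{lem:fn}. The upper bounds $f(n) \le \lceil n/2 \rceil$ (from odd cycles $C_n$, which lie in $\mathscr{G}_n$ for $n \in \{3,5,7,\dots\}$ avoiding $3$- and $5$-cycles when $n \ge 7$) and $f(17) \le 7$ (from the Cayley graph) are already in hand, so the entire task reduces to proving the reverse inequalities $f(n) \ge \lceil n/2 \rceil$ for $n \in \{3,5,8,10,13\}$ and $f(17) \ge 7$. For the two smallest cases $f(3)=2$ and $f(5)=3$ I would argue directly: any graph in $\mathscr{G}_3$ has no triangle, so it is bipartite on $3$ vertices and hence $\alpha \ge 2$; any graph in $\mathscr{G}_5$ has neither a $3$- nor a $5$-cycle, so its only cycles are $C_4$, forcing $\alpha \ge 3$ by an easy case check on $5$ vertices.

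For the larger values the engine is Lemma~\ref{deg_condition}, used contrapositively. To show $f(n) \ge t$ I assume for contradiction that there is $G \in \mathscr{G}_n$ with $\alpha(G) < t$, pick a vertex $v$, and derive the inequality $k_1(v) + f(n - k_1(v) - t + 1) < t$; feeding in already-known smaller values of $f$ then yields a contradiction for every admissible degree $k_1(v)$. Concretely, to prove $f(8) \ge 4$ I would suppose $\alpha(G) \le 3$ with $|V(G)|=8$; then $1 + k_2(v) \le 3$ bounds the low-distance shell, while Lemma~\ref{deg_condition} with $t=4$ gives $k_1(v) + f(5 - k_1(v)) < 4$, and checking each possible $k_1(v)$ against the values $f(3)=2$, $f(5)=3$ rules out every case. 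The same template — choose $t$ equal to the target value, run the degree inequality, and substitute the previously computed $f(m)$ for small $m$ — handles $f(10) \ge 5$ (using $f(3),f(5)$), $f(13) \ge 6$ (using $f(5),f(8)$), and $f(17) \ge 7$ (using $f(8),f(10)$), with Lemma~\ref{lem:fn} supplying the values of $f$ at intermediate $n$ not in the list via the bound $f(n) \ge f(n-1)$.

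The order I would carry this out is strictly bottom-up, since each case consumes the smaller ones: first $f(3)$ and $f(5)$ directly, then $f(8)$, then $f(10)$, then $f(13)$, and finally $f(17)$, interleaving appeals to Lemma~\ref{lem:fn} to fill gaps such as $f(4), f(6), f(7)$ whenever the degree inequality lands on an argument strictly between listed indices. The main obstacle I anticipate is bookkeeping rather than conceptual: for each target $t$ the degree condition must be verified for every feasible value of $k_1(v)$, and one must confirm that a vertex $v$ with a convenient degree actually exists — in particular that the shell sizes $k_1(v), k_2(v)$ can be controlled simultaneously for some choice of $v$. The delicate point is ensuring the strict inequalities in Lemma~\ref{deg_condition} ($\alpha(G) < t < n - k_1(v) + 1$) are genuinely satisfied so the lemma applies; when $k_1(v)$ is large the upper constraint $t < n - k_1(v)+1$ can fail, and one must separately dispose of high-degree vertices, typically by noting that a large neighbourhood together with the triangle-free condition already forces a big independent set. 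I expect these boundary degree cases to be where the real care is needed.
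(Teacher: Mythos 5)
Your overall strategy --- explicit examples for the upper bounds, then Lemma~\ref{deg_condition} plus the monotonicity of Lemma~\ref{lem:fn}, run bottom-up, for the lower bounds --- is the same as the paper's. But there is a genuine gap: your claim that feeding known values of $f$ into the degree inequality ``rules out every case'' of $k_1(v)$ is false, and it fails precisely at \emph{low} degrees, not at the high degrees you flagged as the delicate ones. Take your own example $f(8)\ge 4$: with $t=4$ the inequality to be contradicted is $k_1(v)+f(5-k_1(v))<4$. For $k_1(v)=1$ this reads $1+f(4)<4$, and since $f(4)=2$ (the cycle $C_4$ lies in $\mathscr{G}_4$), the inequality is satisfied --- no contradiction; likewise for $k_1(v)=0$. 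The same happens in the case that actually matters for the paper, $f(17)\ge 7$: for $k_1(v)=2$ one gets $2+f(9)=2+4=6<7$ (since $C_9\in\mathscr{G}_9$ forces $f(9)=4$), so vertices of degree at most $2$ are never excluded by Lemma~\ref{deg_condition}. This is unavoidable rather than a bookkeeping slip: graphs of maximum degree at most $2$, such as $C_{17}$ itself, genuinely exist in $\mathscr{G}_{17}$, so no vertex-local inequality can eliminate them; one must instead prove directly that every such graph has large independence number.

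That is exactly the step the paper supplies and your proposal lacks. In the paper's proof of $f(17)\ge 7$, the degree condition together with Lemma~\ref{lem:fn} eliminates every vertex of degree at least $3$; in the remaining case \emph{every} vertex has $k_1(v)\le 2$, so $G$ is a disjoint union of paths, cycles and isolated vertices. Since $C_3$ and $C_5$ are forbidden, every odd cycle component has length at least $7$; a component on $m$ vertices that is not an odd cycle contributes $\lceil m/2\rceil$ independent vertices, an odd cycle $C_m$ contributes $(m-1)/2$, and since odd cycles of length at least $7$ are few (the paper invokes Theorem~\ref{graph} to allow at most one, but even two, the most that fit in $17$ vertices, give $3+3+2=8$), one obtains $\alpha(G)\ge 8$, the desired contradiction. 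Your writeup anticipates a boundary problem only when $k_1(v)$ is large --- but that case is trivial, since in a triangle-free graph $\Gamma_1(v)$ is independent and $\alpha(G)\ge k_1(v)$ --- and says nothing about the maximum-degree-$\le 2$ configuration, which is where the argument must change character. Without this path/cycle decomposition step, your proofs of $f(8)$, $f(10)$, $f(13)$ and $f(17)$ are all incomplete. (Your direct arguments for $f(3)=2$ and $f(5)=3$ are fine; note also that the upper-bound examples for $n=3,5$ must be the paths $P_3,P_5$, as in the paper, since $C_3,C_5\notin\mathscr{G}_n$.)
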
 
\begin{proof}
 Since $P_3$, $P_5$, $C_8$, $C_{10}$, $C_{13}$ and 
${\rm Cay}(\mathbb{Z}_{17},\{\pm 1, \pm 6\})$ 
are examples whose independence numbers are the values $t$ in the assertion. 
This implies the inequalities $f(n)\le t$ for each case.
It is enough to prove the converse inequalities $f(n)\ge t$. 
We only prove $f(17)\ge 7$ because other inequalities can be proved by a similar way. 
Suppose that there exists $G \in \mathscr{G}_{17}$ such that $\alpha (G)<7$. 
If there exists $v\in V(G)$ such that $k_1(v)=3$, 
then $7> 3+f(8)=3+4$ from Lemma~\ref{deg_condition} with $t=7$, which is a contradiction. 
It is easy to see that we conclude a contradiction for $k_1(v)>3$ 
since $k_1(v)+f(n-k_1(v)-t+1)\leq (k_1(v)+1)+f(n-(k_1(v)+1)-t+1)$ 
holds in general from $f(n+1)-f(n)\le 1$ in Lemma~\ref{lem:fn}. 
Therefore $k_1(v)\le 2$ for any $v\in V(G)$. 
Then $G$ is the union of cycles, paths or isolated vertices. 
Except for an odd cycle, each connected component of $G$ with $m$ 
vertices has an independent set of size $\lceil m/2\rceil$. 
Moreover, $G$ contains at most one odd cycle by Theorem~\ref{graph}. 
Then $\alpha (G)\ge 8$, which is a contradiction. 
Therefore $f(17)\ge 7$ holds. Then we have $f(17)=7$. 
\end{proof}

We can determine other $f(n)$ for small $n$. 
For example we have $f(12)=5$ and $f(16)=7$. 
For $f(12)=5$, it is clear because $5=f(10)\le f(12)$ and 
${\rm  Cay}(\mathbb{Z}_{12},\{\pm 1, 6\})$ has the independence number 5. 
For $f(16)=7$, it is proved by a similar way to the proof of $f(17)=7$, and an attaining graph is obtained by removing one vertex from ${\rm  Cay}(\mathbb{Z}_{17},\{\pm 1, \pm 6\})$ while maintaining the independence number.  
However, the values in Lemma~\ref{table_f} are enough to prove Lemmas~\ref{disconnected} and \ref{diameter:20}.

\begin{lemma}\label{disconnected}
Let $G$ be the diameter graph of $X\subset \mathbb{R}^3$ with $|X|=20$. 
If $G$ is disconnected, then $\alpha (G)\ge 8$. 
\end{lemma}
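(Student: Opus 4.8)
The plan is to split off the cases with short odd cycles first, and then use Dol'nikov's theorem to confine all odd cycles to a single component, after which the bound follows by counting independent sets componentwise.

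First I would dispose of the case where $G$ contains a $3$-cycle or a $5$-cycle: by Corollary~\ref{cycle} such a cycle of length $m\in\{3,5\}$ already forces $\alpha(G)\ge \lceil (20-m)/2\rceil\ge 8$. Hence I may assume $G\in\mathscr{G}_{20}$, i.e.\ $G$ has no $3$-cycle and no $5$-cycle. If moreover $G$ has no odd cycle at all, then $G$ is bipartite and $\alpha(G)\ge\lceil 20/2\rceil=10$, so I may also assume $G$ has an odd cycle. The key structural step is then to apply Dol'nikov's theorem (Theorem~\ref{graph}): any two odd cycles of $G$ share a vertex, so they cannot lie in different connected components. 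Consequently at most one component $D$ of $G$ carries an odd cycle, and every other component is bipartite. Let $B$ be the union of the bipartite components and put $d=|V(D)|$, $b=|V(B)|=20-d$. Since $G$ is disconnected while $D$ is a single component, $B$ is nonempty, whence $1\le d\le 19$.

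Next I would bound the independence number componentwise, using that $\alpha$ is additive over the disjoint union $G=D\sqcup B$. The bipartite part contributes $\alpha(B)\ge\lceil b/2\rceil$, because each bipartite component of order $n_i$ has an independent set of size $\lceil n_i/2\rceil$ and $\sum_i\lceil n_i/2\rceil\ge\lceil b/2\rceil$. The odd-cycle part satisfies $\alpha(D)\ge f(d)$, since $D$ is an induced subgraph of $G\in\mathscr{G}_{20}$ and hence lies in $\mathscr{G}_d$. Combining these gives
\[
\alpha(G)=\alpha(D)+\alpha(B)\ge f(d)+\left\lceil\frac{20-d}{2}\right\rceil .
\]
It then remains to verify that the right-hand side is at least $8$ for every $d$ with $1\le d\le 19$. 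This is a finite check using the values $f(3)=2,\ f(5)=3,\ f(8)=4,\ f(10)=5,\ f(13)=6,\ f(17)=7$ from Lemma~\ref{table_f}, together with the monotonicity $f(d)\le f(d+1)$ from Lemma~\ref{lem:fn} to lower-bound $f(d)$ at the intermediate values; for instance $f(d)\ge f(13)=6$ for $14\le d\le 16$ and $f(d)\ge f(17)=7$ for $18\le d\le 19$. The tightest cases are $d=18,19$, where $\lceil(20-d)/2\rceil+f(d)\ge 1+7=8$.

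The only genuinely nontrivial point — and hence the step I would treat as the main obstacle — is recognizing \emph{why} disconnectedness is essential here: it is exactly what forces $d\le 19$. For a connected $G$ one would only obtain $\alpha(G)\ge f(20)\ge 7$, which is insufficient, whereas at least one vertex lying outside the odd-cycle component supplies the extra $+1$ that pushes the bound to $8$. Once this is isolated, the application of Dol'nikov's theorem, the componentwise bookkeeping, and the finite table verification are all routine.
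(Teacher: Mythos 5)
Your proof is correct, and it takes a recognizably different route from the paper's, so a comparison is worthwhile. Both proofs start identically: Corollary~\ref{cycle} disposes of $3$- and $5$-cycles, reducing to $G\in\mathscr{G}_{20}$. After that, the paper never locates the odd cycles at all: it takes any partition $V=V_1\cup V_2$ with no edges between the parts and $|V_1|\ge 10$, observes that both induced subgraphs lie in $\mathscr{G}_{n_i}$, bounds $\alpha(G)=\alpha(H_1)+\alpha(H_2)\ge f(n_1)+f(n_2)$, and finishes with a case split on $n_1$ (the ranges $10\le n_1\le 15$, $n_1=16$, $17\le n_1\le 19$), using the same table values and monotonicity you use. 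You instead re-invoke Theorem~\ref{graph} to confine all odd cycles to a single component $D$, which makes the remainder $B$ bipartite and lets you replace the generic bound $f(b)$ by the stronger bipartite bound $\lceil b/2\rceil$; this collapses the case analysis into the single one-parameter inequality $f(d)+\lceil (20-d)/2\rceil\ge 8$ for $1\le d\le 19$. Your route buys a cleaner uniform verification and a sharper structural picture (in fact $d\ge 7$, since the odd component must carry a cycle of length at least $7$, so your cases $d\le 6$ are vacuous); the paper's route buys symmetry — it needs no knowledge of where the odd cycles sit and uses only the already-derived Corollary~\ref{cycle} rather than a second direct application of Dol'nikov's theorem. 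Since both arguments ultimately rest on exactly the same inputs, namely Lemma~\ref{table_f} and the monotonicity of $f$ from Lemma~\ref{lem:fn}, the difference is one of decomposition and bookkeeping rather than of underlying content, but your exploitation of bipartiteness outside the odd component is a genuine per-component strengthening of the paper's estimate.
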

\begin{proof}
Since $G$ is disconnected, there exists a partition $V=V_1\cup V_2$ such that 
$v_1$ and $v_2$ are not adjacent for any  $v_1\in V_1$ and $v_2\in V_2$. 
We may assume $|V_1|\ge 10$. Let $n_i=|V_i|$ and  
$H_i$ be the induced subgraph of $G$ with respect to $V_i$ for $i=1,2$. 
Note that we may assume that both $H_1$ and $H_2$ do not contain 
neither a $3$-cycle nor a $5$-cycle by Corollary~\ref{cycle}. 
If $10\le n_1\le 15$, then 
$\alpha (H_1)\ge f(n_1)\ge f(10)\ge 5$ and $\alpha (H_2)\ge f(n_2)\ge f(5) \ge 3$ 
by Lemma ~\ref{table_f}. 
Then $\alpha (G)= \alpha (H_1)+\alpha (H_2)\ge 5+3=8.$ 
If $n_1=16$, then $\alpha (G)\ge f(16)+f(4)\ge f(15)+f(3)= 6+2=8$ since $|H_1|\ge 16$ and $|H_2|\ge 4$. 
If $17\le n_1\le 19$, then $\alpha (G)\ge f(17)+f(1)\ge 7+1=8$ since $|H_1|\ge 17$. 
Therefore $\alpha (G)\ge 8$. 
\end{proof}

\begin{lemma}\label{diameter:20}
Let $G$ be the diameter graph of $X\subset \mathbb{R}^3$ with $|X|=20$. 
Then $\alpha (G)\ge 8$. 
\end{lemma}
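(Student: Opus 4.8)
The plan is to reduce to the case $G \in \mathscr{G}_{20}$ and then argue according to the maximum degree of $G$. First I would dispose of the easy case: if $G$ contains a $3$-cycle or a $5$-cycle, then Corollary~\ref{cycle} gives $\alpha(G) \ge \lceil (20-5)/2 \rceil = 8$ (and $\ge 9$ in the triangle case), so we are done. Hence we may assume $G$ contains neither, i.e. $G \in \mathscr{G}_{20}$. If $G$ is disconnected, Lemma~\ref{disconnected} already yields $\alpha(G) \ge 8$, so it remains to treat the connected case. I would argue by contradiction, assuming $\alpha(G) \le 7$, in a manner parallel to the proof of $f(17) \ge 7$ in Lemma~\ref{table_f}.

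The first structural observation is that, since $G$ is triangle-free, the neighborhood $\Gamma_1(v)$ of any vertex $v$ is an independent set, so $\alpha(G) \ge k_1(v)$; thus $\alpha(G) \le 7$ forces $k_1(v) \le 7$ for every $v$. Next I would rule out every degree in the range $3 \le k_1(v) \le 7$ using the degree condition. Applying Lemma~\ref{deg_condition} with $n = 20$ and $t = 8$ (the hypotheses $\alpha(G) < 8 < 21 - k_1(v)$ hold because $k_1(v) \le 7$), we obtain $k_1(v) + f(13 - k_1(v)) < 8$. Setting $a(k) = k + f(13-k)$, the inequality $f(n+1) - f(n) \le 1$ from Lemma~\ref{lem:fn} shows that $a$ is non-decreasing, while the value $f(10) = 5$ from Lemma~\ref{table_f} gives $a(3) = 3 + 5 = 8$. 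Hence $a(k) \ge 8$ for all $k \ge 3$, contradicting $a(k_1(v)) < 8$. Therefore $k_1(v) \le 2$ for every vertex.

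Finally, a connected graph of maximum degree at most $2$ on $20$ vertices is either the path $P_{20}$ or the cycle $C_{20}$, both of which have independence number $10 \ge 8$, contradicting $\alpha(G) \le 7$. This completes the argument. (If one prefers not to invoke Lemma~\ref{disconnected}, the same degree analysis applies verbatim to a disconnected $G$, where the max-degree-$\le 2$ case becomes a disjoint union of paths and cycles containing at most one odd cycle by Theorem~\ref{graph}, still giving $\alpha(G) \ge 9$.)

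I expect the main obstacle to be the intermediate-degree range $3 \le k_1(v) \le 7$: there neither the crude triangle-free bound $\alpha(G) \ge k_1(v)$ nor the odd-cycle bound of Corollary~\ref{cycle} suffices, and one must feed the exact values of $f$ into the degree condition. The estimate is genuinely tight at $k_1(v) = 3$, where $a(3) = 3 + f(10) = 8$ exactly, so the proof rests essentially on the precise value $f(10) = 5$ together with the monotonicity of $a$ (which in turn relies on $f(n+1) - f(n) \le 1$); any weaker lower bound on $f(10)$ would break the argument.
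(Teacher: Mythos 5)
Your proof is correct and takes essentially the same route as the paper's: both dispose of $3$- and $5$-cycles via Corollary~\ref{cycle}, reduce to the connected case via Lemma~\ref{disconnected}, eliminate degrees $3\le k_1(v)\le 7$ through the bound $\alpha(G)\ge k_1(v)+f(13-k_1(v))$ with the values of $f$ from Lemma~\ref{table_f}, and finish by noting that a connected graph of maximum degree at most $2$ on $20$ vertices is $C_{20}$ or $P_{20}$ with independence number $10$. The only cosmetic difference is that you package the middle step as an application of Lemma~\ref{deg_condition} with $t=8$ together with the monotonicity of $k\mapsto k+f(13-k)$ (via Lemma~\ref{lem:fn}), whereas the paper runs the cases $k_1(v)=3$, $k_1(v)=4$, and $5\le k_1(v)\le 7$ explicitly, using the same inequality each time.
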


\begin{proof}
Suppose that $G$ contains a $3$-cycle or a $5$-cycle. 
Then $\alpha (G)\ge 8$ holds by Corollary~\ref{cycle}. 
Therefore we may assume that $G$ does not contain neither a $3$-cycle nor a $5$-cycle. 
Let $v\in V(G)$. Since $G$ does not contain neither a $3$-cycle nor a $5$-cycle, 
both $\Gamma _1(v)$ and $\Gamma _2(v)\cup \{v\}$ are independent sets. 
Therefore we may assume $k_1(v)\le 7$ and $k_2(v)\le 6$. 
In particular, we may assume $k_3^\ast (v)=20-(1+k_1(v)+k_2(v))\ge 13-k_1(v)$. 
Moreover, we may assume that $G$ is connected by Lemma~\ref{disconnected}. 

Suppose $5\leq k_1(v) \leq 7$ for some $v\in V(G)$. Since $k_3^\ast (v)\ge 13-k_1(v) \ge 6$, we have $\alpha (G_3^\ast (v))\ge f(6) \ge f(5)=3$. 
Let $H$ be an independent set of $G_3^\ast (v)$ with $|H|=3$. 
Then $\Gamma _1(v)\cup H$ is an independent set of $G$. 
Therefore we have $\alpha (G)\ge k_1(v)+f(5)\ge  5+3=8$.
Suppose $k_1(v)=4$ for some $v\in V(G)$. 
Since $k_3^\ast (v) =13-k_1(v)\ge 9$, we have $\alpha (G)\ge k_1(v)+f(9)\ge k_1(v)+f(8) = 4+4=8$. 
Suppose $k_1(v)=3$ for some $v\in V(G)$. 
Then we have $k_3^\ast (v)=13-k_1(v)\ge 10$. 
Therefore $\alpha (G)\ge k_1(v)+f(10)=3+5=8$. 
Suppose $k_1(w)\le 2$ for any $w\in V(G)$. 
Since $k_1(w)\le 2$ and $G$ is connected, $G$ is isomorphic to $C_{20}$ or $P_{20}$.
Then we have $\alpha(G)=10$. 
This completes the proof. 
\end{proof}

By Lemma~\ref{diameter:20}, we have Theorem~\ref{thm:sub}.

\section{Colorings and their realizations} \label{sec:5}
Let $X=\{p_1, p_2, \ldots ,p_n\}$ be an $s$-distance set with $A(X)=\{\alpha_1, \alpha_2,\ldots, \alpha_s\}$. 
Let $[n]=\{1,2,\ldots ,n\}$ and $\binom{S}{k}=\{T\subset S\mid |T|=k\}$ for a finite set $S$. 
An $s$-distance set with $n$ points is represented by an edge coloring of the  complete graph $K_n$ by $s$ colors. 
We regard an $s$-coloring of the edge set of $K_n$ by a surjection $c:\binom{[n]}{2}\to [s]$.  
We define an {\it $s$-coloring} $c: \binom{[n]}{2}\to [s]$ of an $s$-distance set $X$ by a natural manner, namely, 
$c(\{i,j\})=k$ where $d(p_i,p_j)=\alpha_k$. 
Conversely, an $s$-distance set $X$ is called a {\it realization} of $c$ if 
$c$ is a coloring of $X$.

Two $s$-colorings $c_1$ and $c_2$ are said to be {\it equivalent} if there exists bijections 
$g:[s]\to [s]$ and $h:[n]\to [n]$ such that $g(c_1(\{h(i),h(j)\}))=c_2(\{i,j\})$ for each $\{i,j\}\in \binom{[n]}{2}$. 
We define the {\it coloring matrix} $C=C(x_1, x_2,\ldots, x_s)$ of the coloring $c$ with respect to  $x_1, x_2,\ldots, x_s$ by 
\[C_{i,j}=\begin{cases}0 & {\rm if }\  i=j, \\
x_{c(\{i,j\})} & {\rm if } \  i\ne j.\end{cases}\] 
In particular, $C=C(1,2,\ldots ,s)$ is called a {\it normal coloring matrix}. 
A coloring $c$ is often represented as its normal coloring matrix $C$ in this paper.
We distinguish them by  lowercase letter $c$ and uppercase letter $C$.

For a subset $X=\{p_1, p_2, \ldots ,p_n\}\subset \mathbb{R}^d$, we define the {\it squared distance matrix} $D=D(X)$ of $X$ by 
\[D=(d(p_i, p_j)^2)_{1\le i,j\le n}.\] 
For an $n\times n$ symmetric matrix $M=(m_{i,j})_{1\le i,j\le n}$, 
${\rm Gram}(M)$ is defined to be the $(n-1)\times (n-1)$ symmetric matrix with $(i,j)$ entries 
\[ \frac{m_{i,n}+m_{j,n}-m_{i,j}}{2}. 
\] 
For $X=\{p_1, p_2, \ldots ,p_n\}$, the matrix ${\rm Gram}(D(X))$ is the Gram matrix of $X$ 
when $p_n$ is located at the origin.  

\begin{theorem}
Let $M$ be an $(n-1)\times (n-1)$ real symmetric matrix.  
There exists $X$ in $\mathbb{R}^d$ such that 
$M$ is equal to the Gram matrix of $X$ 
if and only if $M=(m_{i,j})_{1\le i,j\le n-1}$ satisfies 
\begin{equation}\label{eq:realize}
\begin{cases}
M {\rm \ is\ positive\ semidefinite}, \\
\rank M\le d, \\
m_{i,i}>0 {{\rm \ for\ every\ }} i\in [n-1]  {\rm \ and \ } \\
m_{i,j}<\frac{m_{i,i}+m_{j,j}}{2} {{\rm \ for\ every\ }}  1\le i<j\le n-1. 
\end{cases}
\end{equation}
\end{theorem}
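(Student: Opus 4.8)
The plan is to recognize this as a refinement of the classical characterization of Gram matrices: a symmetric matrix is the Gram matrix of some collection of vectors in $\mathbb{R}^d$ precisely when it is positive semidefinite of rank at most $d$. The two extra inequalities in \eqref{eq:realize} are there only to encode the requirement that the $n$ points $p_1,\dots,p_n$ be genuinely distinct. I would therefore prove the two implications separately, keeping throughout the normalization that $p_n$ sits at the origin, so that the $(n-1)\times(n-1)$ matrix $M={\rm Gram}(D(X))$ has entries $m_{i,j}=\langle p_i,p_j\rangle$ for $1\le i,j\le n-1$.

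For the forward direction, suppose $M$ is the Gram matrix of some $X=\{p_1,\dots,p_n\}\subset\mathbb{R}^d$ with $p_n$ at the origin. Positive semidefiniteness is immediate from $c^\top M c = \|\sum_i c_i p_i\|^2 \ge 0$ for every real vector $c$, and $\rank M$ equals the dimension of the linear span of $p_1,\dots,p_{n-1}$, which is at most $d$. Since the points are distinct, $p_i\ne p_n=0$ gives $m_{i,i}=\|p_i\|^2>0$, and $p_i\ne p_j$ gives $0<\|p_i-p_j\|^2=m_{i,i}+m_{j,j}-2m_{i,j}$, which rearranges to the fourth condition of \eqref{eq:realize}.

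For the converse, assume $M$ satisfies \eqref{eq:realize}. Because $M$ is positive semidefinite with $\rank M = r\le d$, the spectral theorem yields a factorization $M=B^\top B$ in which $B$ has $r$ rows; writing $v_1,\dots,v_{n-1}\in\mathbb{R}^r\subseteq\mathbb{R}^d$ for the columns of $B$ we obtain $\langle v_i,v_j\rangle=m_{i,j}$. Put $p_i=v_i$ for $i\in[n-1]$ and $p_n=0$. The condition $m_{i,i}>0$ forces $p_i\ne 0=p_n$, and the condition $m_{i,j}<(m_{i,i}+m_{j,j})/2$ forces $\|p_i-p_j\|^2>0$, so all $n$ points are distinct. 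Hence $X=\{p_1,\dots,p_n\}$ is a genuine $n$-point subset of $\mathbb{R}^d$ whose Gram matrix relative to $p_n$ at the origin is exactly $M$.

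I expect no serious obstacle here: once the classical positive-semidefinite-plus-rank decomposition supplies the vectors, everything reduces to routine bookkeeping. The only point that genuinely requires care is the translation between the formal object ${\rm Gram}(D(X))$ and the honest inner-product matrix of the translated points, together with the observation that the two strict inequalities are precisely what guarantee distinctness of the points rather than an accidental collision; maintaining the normalization $p_n=0$ consistently in both directions is what keeps this precise.
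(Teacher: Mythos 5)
Your proof is correct and complete: the forward direction is routine verification, and the converse correctly combines the spectral factorization $M=B^\top B$ with the observation that the third and fourth conditions in \eqref{eq:realize} are exactly what force the $n$ points (including $p_n$ at the origin) to be pairwise distinct. Note that the paper states this theorem without any proof, treating it as the classical characterization of Gram matrices; your argument is precisely the standard one it implicitly relies on, including the care you take in identifying ${\rm Gram}(D(X))$ with the inner-product matrix of the points translated so that $p_n=0$.
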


An $s$-coloring $c:\binom{[n]}{2}\to [s]$ is said to be {\it representable} in $\mathbb{R}^d$ if 
there exists distinct real numbers $\alpha _1, \alpha _2, \ldots , \alpha _s$ 
such that $C(\alpha_1,\alpha _2, \ldots , \alpha _s)$ satisfies (\ref{eq:realize}). 
To decide an $s$-coloring are not representable, the rank condition in (\ref{eq:realize}) is effective. 
An $s$-coloring $c:\binom{[n]}{2}\to [s]$ is said to be {\it quasi representable} in $\mathbb{R}^d$ if 
there exists distinct complex numbers $\alpha _1, \alpha _2, \ldots , \alpha _s$ 
such that $\rank C(\alpha_1,\alpha _2, \ldots , \alpha _s)\le d$. 

For a square matrix $M=(m_{i,j})_{1\le i,j\le n}$ and an index set $T=\{t_1, t_2, \ldots ,t_k\}\in \binom{[n]}{k}$, 
we define a {\it principal submatrix} of $M$ with respect to $T$ by 
\[ {\rm sub} (M;T)=(m_{t_i,t_j})_{1\le i,j\le k}.\]
Let 
\[\mathcal{M}_k(M)= \left\{{\rm sub} (M;T)\mid T\in \binom{[n]}{k}\right\}.\]

We define 
\[r(M)=\max \left\{ k\in [n]\mid  \exists S \in \mathcal{M}_{k}, \det S \ne 0 \right\}. \]

\begin{proposition}
For a square matrix $M$, $r(M) \le \rank M$ holds. Moreover, $r(M)=\rank M$  holds if 
$M$ is positive semidefinite. 
\end{proposition}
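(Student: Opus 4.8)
The plan is to treat the two assertions separately, drawing only on the characterization of matrix rank via nonvanishing minors together with the Gram factorization of a positive semidefinite matrix. Throughout, recall that $r(M)$ is by definition the largest order of a principal submatrix ${\rm sub}(M;T)$ with nonzero determinant.

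For the first inequality $r(M)\le \rank M$, I would invoke the standard fact that $\rank M$ equals the largest order of a square submatrix of $M$ (with arbitrary, not necessarily equal, row and column index sets) having nonzero determinant. Since every principal submatrix ${\rm sub}(M;T)$ is in particular such a square submatrix, obtained by selecting the rows and columns indexed by the same set $T$, the existence of some $S\in\mathcal{M}_k(M)$ with $\det S\ne 0$ already forces $\rank M\ge k$. Taking $k=r(M)$ yields $r(M)\le \rank M$ with no positive-semidefiniteness assumption needed.

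For the equality in the positive semidefinite case, set $r=\rank M$ and write $M=B^\top B$ for some $r\times n$ matrix $B$ of rank $r$; such a factorization exists precisely because a positive semidefinite matrix of rank $r$ is the Gram matrix of $n$ vectors lying in an $r$-dimensional space. Since $\rank B=r$ and $B$ has only $r$ rows, its columns span $\mathbb{R}^r$, so one may choose an index set $T\in\binom{[n]}{r}$ for which the corresponding columns of $B$ form an invertible $r\times r$ matrix $B_T$. The key computation is then $ {\rm sub}(M;T)=B_T^\top B_T$, so that $\det {\rm sub}(M;T)=(\det B_T)^2>0$. This exhibits a nonsingular principal submatrix of order $r$, giving $r(M)\ge r=\rank M$; combined with the first part we obtain equality.

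The only delicate point is the claim that a positive semidefinite matrix of rank $r$ possesses a nonsingular principal submatrix of order $r$, and the Gram factorization reduces this cleanly to selecting $r$ independent columns of $B$. I would stress that positive semidefiniteness is genuinely required: for a general matrix all principal minors may vanish while the rank is positive (for instance a strictly upper triangular matrix with zero diagonal), so in general one can only guarantee the inequality $r(M)\le\rank M$ rather than equality.
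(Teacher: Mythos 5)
Your proof is correct and follows essentially the same route as the paper: the inequality $r(M)\le\rank M$ via the characterization of rank by nonvanishing (not necessarily principal) minors, and the equality in the positive semidefinite case via the Gram factorization $M=B^\top B$ with $\rank B=\rank M$. The only difference is in the finish, and it is cosmetic: where you select an invertible $r\times r$ block $B_T$ of the factor and compute $\det\,{\rm sub}(M;T)=(\det B_T)^2>0$ directly, the paper reaches the same conclusion by proving the rank identity $\rank(\chi_T N N^\top \chi_T^\top)=\rank(\chi_T N)$ through a kernel argument on the quadratic form and then maximizing over $T$; your version is slightly more direct, and your closing counterexample (a nilpotent upper triangular matrix) correctly shows that positive semidefiniteness is indispensable.
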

\begin{proof}
It is well known that $\rank M$ is the maximum value $k$ such that  
there exists a square submatrix $S$ of size $k$ in $M$ with $\det S \ne 0$ that may not be principal. 
This implies $r(M) \le \rank M$. Suppose $M$ is a positive semidefinite matrix of size $n$. 
 Since $M$ is positive semidefinite, there exists $n \times \rank M$ matrix $N$ such that $M=NN^\top$ and $\rank N=\rank M$.  
For $T \in \binom{[n]}{k}$, let $\chi_{T}$ be 
the $n\times n$ diagonal matrix with diagonal entries $(\chi_{T})_{ii}=1$ if $i \in T$, and $(\chi_{T})_{ii}=0$ if $i \not\in T$.  
For a row vector $\mathbf{x} \in \mathbb{R}^n$ and $T \in \binom{[n]}{k}$, it follows that  $\mathbf{x} (\chi_{T} N N^\top \chi_{T}^\top)\mathbf{x}^\top=0$ if and only if
$\mathbf{x}(\chi_{T} N)=0$. Thus, 
\begin{align*}
\rank(\chi_{T} N N^\top \chi_{T}^\top)&=n-\dim \{\mathbf{x}\in \mathbb{R}^n \mid 
\mathbf{x} (\chi_{T} N N^\top \chi_{T}^\top)\mathbf{x}^\top=0\}\\
&=n-\dim \{\mathbf{x} \in \mathbb{R}^n \mid \mathbf{x}(\chi_{T} N)=0\}=\rank (\chi_T N).
\end{align*}
For $T \in \binom{[n]}{k}$, it follows that $\det ({\rm sub}(M;T))\ne 0$ if and only if $\rank (\chi_{T} N N^\top \chi_{T}^\top)\geq k$. 
This implies that $r(M)$ is the maximum value $k$ such that $\rank(\chi_{T} N)=k$, which is $k=\rank N=\rank M$.  
\end{proof}

An $s$-coloring $c:\binom{[n]}{2}\to [s]$ is said to be {\it weakly quasi representable} in $\mathbb{R}^d$ if 
there exist distinct complex numbers $\alpha _1, \alpha _2, \ldots , \alpha _s$ such that 
$r(C(\alpha _1,\alpha_2,\ldots ,\alpha_s))\le d$. 
Clearly if an $s$-coloring $c$ is representable in $\mathbb{R}^3$, then 
$c$ is a (weakly) quasi representable in $\mathbb{R}^3$. 
The following proposition is essentially proved by Sz\"{o}ll\H{o}si and \"{O}sterg{\aa}rd \cite{SO20}, but 
we should take all submatrices $M$ that may not be principal in their result. 
Actually, it is enough to use all principal submatrices $M$ to collect our desired colorings. 
\begin{proposition}\label{prop:equivalent}
An $s$-coloring $c:\binom{[n]}{2}\to [s]$ is a weakly quasi representable in $\mathbb{R}^3$ if and only if 
the following system of equations in $s$ values 
\begin{equation}\label{eq:wqrc}
    \begin{cases}
        \det M=0 \quad {{\rm for \ all}}\ M\in \mathcal{M}_4({\rm Gram}(C(1,{x_1,x_2,\ldots ,x_{s-1}}))),\\
        \quad 1+u\prod_{i=1}^{s-1} x_i(x_i-1)\prod_{1\le j<k\le s-1} (x_j-x_k)=0 
    \end{cases}
\end{equation}
has a complex solution.
\end{proposition}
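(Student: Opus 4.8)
The plan is to unwind both sides of the asserted equivalence into statements about the single parametrized symmetric matrix ${\rm Gram}(C(1,x_1,\dots,x_{s-1}))$ and to match them term by term. First I would record that realizability in $\mathbb{R}^3$ is a condition on the Gram matrix rather than on the coloring (squared-distance) matrix itself: by the realization theorem and the definition of $r$, the coloring $c$ is weakly quasi representable in $\mathbb{R}^3$ exactly when there are distinct nonzero complex values $\alpha_1,\dots,\alpha_s$ with $r({\rm Gram}(C(\alpha_1,\dots,\alpha_s)))\le 3$. Since $s$-distance sets are considered up to similarity, and scaling all distances by a nonzero complex constant scales all squared distances by its square, I may normalize $\alpha_1=1$ and set $\alpha_{i+1}=x_i$ for $i=1,\dots,s-1$. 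This turns the existence of admissible values into the existence of a complex point $(x_1,\dots,x_{s-1})$ satisfying a rank constraint together with the genuine-distance-set constraints that the $\alpha_i$ be pairwise distinct and nonzero.

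Next I would encode the two kinds of constraints separately. The requirement that $1,x_1,\dots,x_{s-1}$ be pairwise distinct and that each $x_i$ be nonzero is exactly $\prod_{i=1}^{s-1}x_i(x_i-1)\prod_{1\le j<k\le s-1}(x_j-x_k)\neq 0$; I would convert this open condition into a polynomial equation by the Rabinowitsch device, introducing a slack variable $u$, so that the inequation holds for some $(x_i)$ iff the equation $1+u\prod_i x_i(x_i-1)\prod_{j<k}(x_j-x_k)=0$ is solvable in $(x_i,u)$. This produces the second line of (\ref{eq:wqrc}) and guarantees that any solution of the full system corresponds to an honest assignment of $s$ distinct nonzero squared distances.

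The heart of the argument is translating the rank constraint $r({\rm Gram}(C(1,x_1,\dots,x_{s-1})))\le 3$ into the vanishing of the principal minors appearing in the first line of (\ref{eq:wqrc}). By the definition of $r$, this constraint says precisely that every principal submatrix of ${\rm Gram}(C)$ of size at least $4$ is singular, i.e. $\det M=0$ for all $M\in\mathcal{M}_4({\rm Gram}(C))$ (together with the larger principal minors). The reason for phrasing everything through $r$ and through \emph{principal} submatrices — rather than through $\rank$ and all possibly non-principal submatrices, as in the method of Sz\"{o}ll\H{o}si and \"{O}sterg{\aa}rd — is the preceding proposition: it gives $r(M)\le\rank M$ in general, with equality when $M$ is positive semidefinite. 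In the genuinely realizable regime ${\rm Gram}(C)$ is positive semidefinite, so the principal-minor condition detects $\rank\le 3$ exactly, while in the complex relaxation the weaker invariant $r$ is exactly the one matched by the principal-minor system. Assembling the two encodings, a complex point satisfies (\ref{eq:wqrc}) iff it furnishes distinct nonzero $\alpha_i$ with $r({\rm Gram}(C))\le 3$, which is the definition of weak quasi representability.

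I expect the delicate step to be this last translation, and in particular the forward implication that solvability of the finite system forces $r({\rm Gram}(C))\le 3$. The issue is that over $\mathbb{C}$ the vanishing of the size-$4$ principal minors alone does not in general prevent a larger principal submatrix from being nonsingular, so one must either include the principal minors of all sizes $\ge 4$ in the first line of (\ref{eq:wqrc}) or argue from the special structure of ${\rm Gram}(C)$ — and from the equality $r=\rank$ on the positive semidefinite locus supplied by the previous proposition — that the size-$4$ conditions already pin down $r\le 3$. Making this reduction to principal submatrices rigorous, which is exactly the refinement of the Sz\"{o}ll\H{o}si–\"{O}sterg{\aa}rd criterion claimed in the surrounding text, is where the real work lies; the normalization $\alpha_1=1$ and the Rabinowitsch step are routine by comparison.
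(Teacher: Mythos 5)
Your outline gets the bookkeeping right, but it does not actually prove the proposition, and you say so yourself: the step you defer at the end is the entire content of the statement. Unwinding the definition of $r$, weak quasi representability requires that \emph{every} principal minor of ${\rm Gram}(C)$ of size $4,5,\dots,n-1$ vanish, whereas system \eqref{eq:wqrc} imposes only the minors of size exactly $4$. Your normalization $\alpha_1=1$ (legitimate since ${\rm Gram}(C(t\alpha_1,\dots,t\alpha_s))=t\,{\rm Gram}(C(\alpha_1,\dots,\alpha_s))$, modulo the unaddressed edge case $\alpha_1=0$) and the Rabinowitsch encoding of ``distinct and nonzero'' are indeed routine, so everything hinges on showing that the size-$4$ conditions force $r\le 3$. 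That implication is \emph{not} a general fact about symmetric matrices, even real ones, so it cannot be waved through: let $N$ be the adjacency matrix of the $5$-cycle (so $N$ is symmetric with zero diagonal and $\det N=2$), and put $M=N^{-1}$. Every $4\times 4$ principal minor of $M$ equals $\det(M)\,(M^{-1})_{ii}=\det(M)\,N_{ii}=0$, yet $\det M\neq 0$, so $r(M)=5$. Consequently, any correct proof of the ``if'' direction must exploit the special shape of ${\rm Gram}(C(1,x_1,\dots,x_{s-1}))$ together with the constraint that the color values are distinct and nonzero; in the example above, reconstructing a coloring matrix from $M$ via $C_{ij}=m_{ii}+m_{jj}-2m_{ij}$ forces the color value $0$, which the second equation of \eqref{eq:wqrc} forbids, so the constraints really do carry the load. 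Your proposal gestures at exactly this alternative (``argue from the special structure of ${\rm Gram}(C)$'') but supplies no argument for it.

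Your fallback, the equality $r=\rank$ on the positive semidefinite locus, cannot close the gap either: solutions of \eqref{eq:wqrc} are complex points, and nothing makes the corresponding ${\rm Gram}(C)$ positive semidefinite there---that is precisely why the definition uses $r$ rather than $\rank$. Note also that the paper itself offers no proof of this proposition: it attributes the result to Sz\"{o}ll\H{o}si and \"{O}sterg{\aa}rd, whose criterion uses all (not necessarily principal) $4\times 4$ submatrices, i.e.\ a genuine rank condition, and the passage from ``all $4\times 4$ submatrices'' to ``all principal $4\times 4$ submatrices'' is exactly the refinement being claimed. So the one step you leave open is the one step that distinguishes this proposition from the cited result, and without it the equivalence is unproved.
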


\section{$5$-distance sets containing $8$-point $s$-distance sets for $s\le 4$}

By Theorem~\ref{thm:sub}, to classify $20$-point $5$-distance sets in $\mathbb{R}^3$, 
it is enough to consider $5$-distance sets which contain $8$-point $s$-distance sets in $\mathbb{R}^3$ for $s\le 4$. We will prove the following theorem in this section. 

\begin{theorem}\label{thm:contain8}
Let $Y$ be an $s$-distance set in $\mathbb{R}^3$ with $8$ points for $3\le s\le 4$. 
If $Y\cup Z$ is a $5$-distance set in $\mathbb{R}^3$ with at least $20$ points, 
then $Y\cup Z$ is isomorphic to a regular dodecahedron. 
\end{theorem}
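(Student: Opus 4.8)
The plan is to combine the classification of small distance sets via colorings with a geometric rigidity observation that turns the augmentation from $8$ to $20$ points into a finite search. First I would record that $Y\cup Z$ cannot be planar, since a planar $5$-distance set has at most $g_2(5)=12<20$ points; hence $Y\cup Z$ spans $\mathbb{R}^3$ and contains four affinely independent points $p_1,p_2,p_3,p_4$, which I fix as a coordinate frame. The key point is that in $\mathbb{R}^3$ any point $q$ is uniquely determined by the four distances $d(q,p_i)$, $i=1,\dots,4$; since $Y\cup Z$ realizes only five distinct distances, each further point has a distance-type in $\{d_1,\dots,d_5\}^4$, so once the frame and the five distance values are fixed there are at most $5^4$ candidate positions. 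This bounds the universe of possible points and reduces the problem to a finite combinatorial one for each admissible $Y$.

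Next I would reduce to finitely many seeds $Y$. Using the coloring framework of Section~\ref{sec:5} and Proposition~\ref{prop:equivalent}, an $8$-point $s$-distance set in $\mathbb{R}^3$ corresponds to a weakly quasi representable $s$-coloring of $K_8$; these can be enumerated up to equivalence, and only those actually representable in $\mathbb{R}^3$ survive. For each surviving $Y$ I would grow it one point at a time following the coloring-augmentation algorithm, encoding each enlarged set as a coloring and discarding any branch whose $4\times 4$ Gram minors force the rank above $3$ or force a sixth distance. The frame observation guarantees that, relative to fixed $p_1,\dots,p_4$, the branching factor at each step is bounded by the number of admissible distance-types, so the augmentation tree is finite.

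I would then carry the augmentation up to $20$ points and show that the dodecahedron is the only configuration surviving to that size. Here Lemmas~\ref{lem:SubDodeca3ds} and \ref{lem:SubDodeca4ds} feed in from the opposite direction: they identify exactly which $8$-point $3$- and $4$-distance sets occur inside a dodecahedron (the cube, and the $116$ four-distance sets), so once a branch closes up to a dodecahedron I can match its seed against this list for consistency, while any seed not on the list must fail to reach $20$ points. Combined with Theorem~\ref{thm:sub}, which supplies such an $8$-point seed inside every $5$-distance set of at least $20$ points, this would yield Theorem~\ref{thm:main}.

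The main obstacle I expect is controlling the size of the search and guaranteeing its completeness, rather than any single hard inequality. The naive augmentation from $8$ to $20$ points branches far too widely to treat by hand, and the unknown fifth distance $d_5$ introduces a continuous parameter that must be pinned down before the candidate universe becomes finite. The rigidity coming from the four-point frame is precisely what collapses this continuum: once four affinely independent distances are fixed, every further point is rigid, so the realizability conditions reduce to a finite polynomial system of the type in Proposition~\ref{prop:equivalent} whose solutions can be enumerated. Verifying that every branch other than the dodecahedron terminates strictly below $20$ points is the delicate bookkeeping the remainder of the section must supply.
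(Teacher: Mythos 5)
Your overall framework---encode $8$-point seeds via the coloring machinery of Proposition~\ref{prop:equivalent}, exploit the rigidity of a point in $\mathbb{R}^3$ once its distances to an affinely independent (or non-coplanar) reference set are fixed, and reduce the theorem to a finite computer search---is the framework the paper itself works in, and your rigidity observation does appear there (stated just before Lemma~\ref{lem:loop}). The genuine gap is that the step you defer as ``delicate bookkeeping'' is in fact the entire content of the proof, and the naive point-by-point augmentation from $8$ to $20$ points that you propose is precisely what the paper is designed to avoid: it would require enumerating all weakly quasi representable $5$-colorings at every intermediate size $9,10,\dots,20$, each node costing a Gr\"{o}bner basis computation, and the counts explode (already for at most $4$ colorings the number at $n=6$ is $62095$, see Table~\ref{tab:SO2020_2}). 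The paper's key idea, absent from your proposal, is to collapse the twelve-step augmentation into a single graph: for each seed coloring $C$ one forms the graph $G(C)$ whose vertices are the admissible one-point extension vectors $(a_1,\dots,a_8)\in[5]^8$ and whose edges are the pairs of vectors admissible simultaneously, with loops recording the planar seeds for which one distance vector is realized by two mirror-image points (Lemma~\ref{lem:loop}). Any twelve added points must yield pairwise compatible extension vectors, so the necessary condition becomes $\omega^\ast(C)=\omega(G(C))+l(G(C))\ge 12$---a clique computation built from only $9$- and $10$-point realizability checks. Lemma~\ref{lem:QRC} then shows that exactly $1+63$ of the $19+1074$ seed colorings survive, each with a \emph{unique} $12$-clique, and it is the uniqueness of that clique, combined with rigidity, that forces any $20$-point extension to be the dodecahedron.

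A second, related gap concerns the role you assign to Lemmas~\ref{lem:SubDodeca3ds} and \ref{lem:SubDodeca4ds}. You claim that ``any seed not on the list must fail to reach $20$ points,'' but those lemmas only identify which $8$-point sets sit inside a dodecahedron; they say nothing about extendability, so this claim is not a consistency check---it is an additional statement requiring proof. It is also where the subtlest phenomenon of the paper lives: colorings and realizations are not in bijection. The surviving coloring $C_1$ has four non-isomorphic realizations, two of which ($X_1$, $X_2$) are dodecahedral subsets while two ($X_1'$, $X_2'$) are not; the non-dodecahedral ones pass every coloring-level test your search would perform, since they carry the very same coloring. The paper disposes of them by a separate geometric computation: only three points can be adjoined to $X_i'$ while keeping at most five distances, so any $5$-distance set containing $X_i'$ has at most $11$ points. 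Without the clique device, the realization analysis of Lemma~\ref{lem:realizations}, and this exceptional-case argument, your proposal is a restatement of the theorem as a finite but unexecuted (and, as sketched, infeasible) search rather than a proof.
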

Note that there exists no 2-distance set with 8 points in $\mathbb{R}^3$. 
In this section, firstly, we consider (weak) quasi representable $s$-colorings $c$ 
in $\mathbb{R}^3$ instead of $s$-distance sets in $\mathbb{R}^3$. 
Then we consider realizations of $c$ as needed. 

Sz\"{o}ll\H{o}si and \"{O}sterg{\aa}rd \cite{SO20} classified 
quasi representable $s$-colorings in $\mathbb{R}^3$ for $s\le 4$, see Table~\ref{tab:SO2020_2}. 

\begin{lemma}[Sz\"{o}ll\H{o}si and \"{O}sterg{\aa}rd \cite{SO20}]\label{CG}
There exist exactly $19$ quasi representable $3$-colorings in $\mathbb{R}^3$ 
with $8$ vertices and  exactly
$1074$ quasi representable $4$-colorings in $\mathbb{R}^3$ with $8$ vertices. 
\end{lemma}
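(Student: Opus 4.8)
The plan is to prove the statement by a finite, isomorph-free enumeration of edge colorings of $K_8$, deciding the rank condition for each candidate over $\mathbb{C}$. Quasi representability depends only on the equivalence class of a coloring: permuting the colors merely relabels which of the distinct values $\alpha_1,\dots,\alpha_s$ is assigned where, and permuting the vertices replaces $C(\alpha_1,\dots,\alpha_s)$ by $PC(\alpha_1,\dots,\alpha_s)P^\top$ for a permutation matrix $P$, so neither operation changes $\rank C$. Hence it suffices to test one representative of each class, and the first step is to generate exactly one coloring per orbit of the action of $S_8\times S_s$ on the set of surjections $c:\binom{[8]}{2}\to[s]$.

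For the generation I would use orderly generation / canonical augmentation, extending partial colorings vertex by vertex and rejecting non-canonical copies via a canonical form of the colored complete graph. The crucial pruning is that the rank of a principal submatrix is at most the rank of the whole matrix; hence any coloring whose restriction to some $k<8$ vertices is already not quasi representable can be discarded at once. Extending only quasi representable partial colorings keeps the number of $8$-vertex candidates small, which is what makes the $s=4$ case (where the raw count $4^{28}$ is astronomical) tractable.

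Second, for each surviving coloring I would decide quasi representability by testing, over $\mathbb{C}$, the consistency of the polynomial system expressing $\rank C\le 3$ together with the distinctness of the color values. Writing $C=C(x_1,\dots,x_s)$ with indeterminate off-diagonal entries, $\rank C\le 3$ is the simultaneous vanishing of all $4\times 4$ minors of $C$; for efficient pruning I would first impose only the vanishing of the $\binom{8}{4}=70$ principal $4\times 4$ minors of $\mathrm{Gram}(C)$, which by the proposition relating $r(M)$ and $\rank M$ characterizes the weaker notion of weak quasi representability (the first family of equations in (\ref{eq:wqrc})), and then adjoin the remaining minors to recover quasi representability exactly. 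The constraint that $\alpha_1,\dots,\alpha_s$ be distinct and nonzero I would encode by the single Rabinowitsch-type equation $1+u\prod_i x_i(x_i-1)\prod_{j<k}(x_j-x_k)=0$ in an auxiliary variable $u$, as in the second line of (\ref{eq:wqrc}); the coloring is quasi representable precisely when the resulting ideal is proper, which I would verify by a Gr\"obner basis computation.

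The main obstacle is twofold and computational: the completeness and correctness of the isomorph rejection, and the reliable decision of emptiness of a complex variety cut out by the minor equations together with the open distinctness constraints. For $s=4$ both the number of candidates surviving the pruning and the size and degree of the polynomial systems grow substantially, so the feasibility of the argument rests on (i) aggressive early pruning by non-representable sub-colorings and (ii) the reduction from all $4\times 4$ minors to the $70$ principal minors of $\mathrm{Gram}(C)$, which keeps each Gr\"obner basis computation within reach. Finally I would tally the consistent colorings, obtaining $19$ for $s=3$ and $1074$ for $s=4$, matching the counts of Sz\"{o}ll\H{o}si and \"{O}sterg{\aa}rd.
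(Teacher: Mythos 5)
Your outline matches the paper's treatment of this statement: the paper does not prove the lemma itself but imports it from the computer classification of Sz\"{o}ll\H{o}si and \"{O}sterg{\aa}rd \cite{SO20}, whose method---isomorph-free stepwise augmentation (one vertex per iteration) with pruning by non-representable sub-colorings, followed by Gr\"{o}bner-basis tests of the minor equations of system (\ref{eq:wqrc}) with the Rabinowitsch-type equation enforcing distinctness---is exactly what you describe. One small slip worth fixing: ${\rm Gram}(C)$ for an $8\times 8$ coloring matrix is $7\times 7$, so the number of principal $4\times 4$ minors is $\binom{7}{4}=35$, not $\binom{8}{4}=70$.
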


\begin{table} 
\begin{center}
\begin{tabular}{ccccccccccc}
\hline 
$n$&{$5$}&{$6$}&$7$&$8$&$9$&$10$&$11$&$12$&$13$&$14$\\
\hline 
$\sharp$ QRC & $512$ & {$62095$} & {$4499$} & {$1093$} & $277$ & $59$ & $12$ & $5$ & $2$ & $0$  \\
\hline
\end{tabular}
\caption{Number of quasi representable at most $4$-colorings in $\mathbb{R}^3$ of $n$ points}
\label{tab:SO2020_2}
\end{center}
\end{table}

We denote the set of all quasi representable $s$-colorings 
in $\mathbb{R}^3$ with $n$ vertices by $\mathcal{CG}(n,s)$. 
By Lemma~\ref{CG}, we have $|\mathcal{CG}(8,3)|=19$ and $|\mathcal{ CG}(8,4)|=1074$. 

Let $C\in \mathcal{CG}(8,3)\cup \mathcal{CG}(8,4)$. 
We define a graph $G(C)=(V,E)$ with respect to $C$ as follows. 
By a computer search, we find all vectors $(a_1, a_2,\ldots ,a_8)\in [5]^8$ 
such that 
\begin{equation}\label{nine}
M=\left(\begin{array}{ccccc}
0&a_1&\cdots &a_8\\
a_1&  &&&\\
\vdots  &&C&\\ 
a_8&  &&\\
\end{array}
\right)
\end{equation}
is a weakly quasi representable $s$-colorings in $\mathbb{R}^3$ for $s\le 5$ by Proposition~\ref{prop:equivalent}. In order to check whether $M$ is weakly quasi representable, first we calculate a Gr\"{o}bner basis $\mathfrak{B}$ of system \eqref{eq:wqrc} for $C$, see \cite{SO20} about the manner in details. 
Then, we calculate a Gr\"{o}bner basis for 
the union of $\mathfrak{B}$ and the set of the first equations in \eqref{eq:wqrc} for all ${\rm sub}(M;T)$ with $1 \in T$,
which determine whether $M$ is weakly quasi representable. 
Throughout this paper, computer calculations are done with functions of the software Magma \cite{magma} and Maple \cite{maple}.   

We regard the set of all vectors satisfying (\ref{nine}) as the vertex set $V$ of the graph $G(C)$. 
Then two vertices $(a_1, a_2,\ldots ,a_8), (a'_1, a'_2,\ldots ,a'_8)\in V$ are adjacent if 
there exists $i\in [5]$ such that 
\begin{equation}\label{ten}
\left(\begin{array}{cccccc}
0 & i &a_1&\cdots &a_8\\
i & 0 &a'_1&\cdots &a'_8\\
a_1& a'_1  &&&\\
\vdots  & \vdots&&C&\\ 
a_8&  a'_8 &&&\\
\end{array}
\right)
\end{equation}
is a weakly quasi representable $s$-coloring in $\mathbb{R}^3$ for $s\le 5$. 
Some special graphs have loops as Lemma~\ref{lem:loop} below.   
For positive real numbers $\alpha_1, \alpha_2,\ldots ,\alpha_n$ and a subset $X=\{p_1, p_2, \ldots , p_n\}\subset \mathbb{R}^3$ which 
is not co-linear, 
there exist at most two points 
$q\in \mathbb{R}^3$ such that $d(p_i,q)=\alpha _i$ for any $i\in [n]$. 
In particular, if there exist two points which satisfy this condition, 
then $X$ is co-planar.  
By exhaustive computer search, we have the following lemma. 

\begin{lemma}\label{lem:loop}
Let $C\in \mathcal{CG}(8,3)\cup \mathcal{CG}(8,4)$. 
$G(C)$ has a loop if and only if a realization of $C$ is 
isomorphic to one of the following nine $4$-distance sets.
\begin{itemize}
\item[{\rm (a)}] the subset with $8$ points of a regular nonagon, 
\item[{\rm (b)}] a regular octagon, 
\item[{\rm (c)}] the six subsets with $8$ points of the set in Figure 1 (a) in Theorem~\ref{thm:known}, 
\item[{\rm (d)}] the set in Figure 1 (d) in Theorem~\ref{thm:known}.  
\end{itemize} 
Moreover, there exist two loops only for (d) and there is only one loop for other cases. 
\end{lemma}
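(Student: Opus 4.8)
The proof hinges on the geometric fact recalled just before the statement: for a non-co-linear $X=\{p_1,\dots,p_8\}\subset\mathbb{R}^3$ and prescribed radii, there are at most two points at those distances from $X$, and two such points exist only when $X$ is co-planar. The plan is to read a loop of $G(C)$ as the assertion that $C$ has a realization admitting two distinct points with a common distance vector, and then to force co-planarity.

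First I would interpret a loop geometrically. A loop at $v=(a_1,\dots,a_8)$ means the $10\times 10$ coloring obtained from two copies of $v$ joined by an edge of some color $i\in[5]$ is weakly quasi representable in $\mathbb{R}^3$; read at the level of realizations (as the computation confirms), this yields an $8$-point realization $X$ of $C$ together with two points $q,q'$ with $d(p_j,q)=d(p_j,q')=\alpha_{a_j}$ for all $j$ and $d(q,q')=\alpha_i$. Since $i\in[5]$ forces $q\ne q'$, the points $q,q'$ share a distance vector to $X$. An $8$-point set with only $4$ distances cannot be co-linear (eight co-linear points already span at least seven distinct distances), so the quoted fact applies and $X$ is co-planar. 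As $g_2(3)=7<8$, the planar set $X$ is in fact a $4$-distance set, hence one of those classified in Theorem~\ref{thm:known}(1): the regular octagon, the regular heptagon with its centre, the set of Figure~\ref{fig:img}(d), or an $8$-point subset of the regular nonagon or of one of Figure~\ref{fig:img}(a)--(c).

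Next I would set up the geometry that decides which candidates actually produce a loop. With $\hat n$ the unit normal of the plane of $X$, the two points sharing a distance vector must be reflections $q=\bar q+h\hat n$ and $q'=\bar q-h\hat n$ of a planar foot $\bar q$, so that $d(p_j,q)^2=d(p_j,\bar q)^2+h^2$ and $d(q,q')=2h$. Thus a loop exists exactly when some $\bar q$ in the plane and some $h>0$ make the multiset $\{\sqrt{d(p_j,\bar q)^2+h^2}\}_{j}\cup\{2h\}$ contribute at most one distance value beyond the four distances of $X$. This is the crucial collapse condition: $\bar q$ must be placed so that $d(p_j,\bar q)$ takes very few values (the circumcentre of a concyclic set being the prototypical choice), and $h$ must be tuned so that $\sqrt{r^2+h^2}$ and $2h$ coincide with each other or with an existing distance. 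I would solve these algebraic conditions candidate by candidate, eliminating the heptagon-with-centre and the subsets of Figure~\ref{fig:img}(b),(c), and retaining exactly the nine sets (a)--(d); the multiplicity then follows from counting admissible distance vectors $(\alpha_{a_1},\dots,\alpha_{a_8})$ up to coloring equivalence, which is two for Figure~\ref{fig:img}(d) and one otherwise. Conversely, each of the nine is settled by exhibiting the apex pair explicitly (for the concyclic cases, $\bar q$ at the centre with $4h^2=r^2+h^2$ already produces a single new distance), furnishing the loop.

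The main obstacle is the exhaustiveness of this third step: one must verify that no admissible foot $\bar q$ other than the symmetric ones can collapse the new distances to a single value, and must do so for every candidate and every way of placing $\bar q$ at a special point of $X$. This is precisely the finite but delicate search that the text delegates to a computer; the geometric reduction above is what makes the search finite, by confining $X$ to the plane and reducing a loop to the one-parameter collapse condition in $h$ for each discrete position of $\bar q$.
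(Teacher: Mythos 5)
Your geometric skeleton is the one the paper itself gestures at---the two-point/coplanarity fact is stated immediately before the statement---but the paper's actual proof of Lemma~\ref{lem:loop} is nothing more than an exhaustive computer search over the $19+1074$ colorings in $\mathcal{CG}(8,3)\cup\mathcal{CG}(8,4)$, deciding the loop condition for each color vector via the Gr\"obner-basis criterion of Proposition~\ref{prop:equivalent}. Measured against that, your forward direction has a genuine gap. A loop of $G(C)$ is, by definition, the statement that the matrix in (\ref{ten}) with $(a_1',\ldots,a_8')=(a_1,\ldots,a_8)$ is \emph{weakly quasi representable}: there exist distinct \emph{complex} numbers $\alpha_1,\ldots,\alpha_s$ for which every $4\times 4$ principal minor of the associated Gram matrix vanishes. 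This is an algebraic rank condition over $\mathbb{C}$; it does not by itself produce real points $q,q'$ at prescribed distances from a real realization $X$, so it does not trigger the reflection/coplanarity argument on which your whole reduction rests. Your parenthetical ``(as the computation confirms)'' is precisely the assertion that needs proof, and proving it amounts to the paper's search: without it you cannot exclude a coloring $C$ whose realizations are all non-coplanar (or which has no real realization at all---most elements of $\mathcal{CG}(8,4)$ do not) but for which the complex system (\ref{eq:wqrc}) happens to admit a spurious solution with two equal rows. Only the converse direction is safe on your argument: a real apex pair gives a representable, hence weakly quasi representable, $10$-point coloring, and your concyclic bipyramid with $4h^2=r^2+h^2$ does settle cases (a) and (b).

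The second gap is the claimed finiteness of the candidate-by-candidate check. The collapse condition is a system in three continuous parameters ($\bar q$ in the plane and $h>0$), and nothing in your argument confines $\bar q$ to finitely many ``special'' positions; that symmetric feet are the only possibilities must itself be proved for every candidate delivered by Theorem~\ref{thm:known}(1), including the ones you need to \emph{eliminate} (the heptagon with its center and the $8$-point subsets of Figure~\ref{fig:img}(b),(c)), and also for cases (c) and (d), which your sketch never actually treats. Relatedly, the remark that your reduction ``is what makes the search finite'' is inaccurate: the paper's search is already finite and effective, being a loop over finitely many colorings and color vectors $(a_1,\ldots,a_8)\in[5]^8$, each instance decided by a Gr\"obner-basis computation. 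What your route would buy, were both gaps closed, is a conceptual explanation of the list---coplanarity plus the planar classification explains why only planar $4$-distance sets can carry loops, and the bipyramid construction explains sufficiency---but as written it is not a proof, and its unavoidable computational residue is the same kind of verification the paper performs directly.
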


For $C\in \mathcal{CG}(8,3)\cup \mathcal{CG}(8,4)$,  
let $\omega^\ast (C)=\omega(G(C))+l(G(C))$,  
where $\omega(G(C))$ is the clique number of $G(C)$ and $l(G(C))$ is the number of loops in $G(C)$. 
To prove Theorem~\ref{thm:contain8}, it is enough to classify 
$C\in \mathcal{CG}(8,3)\cup \mathcal{CG}(8,4)$ such that 
$\omega^\ast(C)\ge 12$. 
By exhaustive computer search, we have the following lemma. 

\begin{lemma}\label{lem:QRC} 
\begin{itemize}
    \item[{\rm (i)}] There exists a unique coloring $C\in \mathcal{CG}(8,3)$ with $\omega^\ast (C)\ge 12$, which corresponds to the cube. Moreover, $\omega ^\ast  (C)=\omega(G(C))= 12$ and there exists the unique clique of order $12$ for the coloring. 
    \item[{\rm (ii)}] There exist exactly $63$ colorings  $C\in \mathcal{CG}(8,4)$ with $\omega^\ast  (C)\ge 12$. Moreover, $\omega ^\ast  (C)=\omega(G(C))= 12$ and there exists the unique clique of order $12$ for each coloring among the $63$ colorings. 
\end{itemize}
\end{lemma}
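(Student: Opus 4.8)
The statement is a finite classification, so the plan is to realize the exhaustive computer search implicit in the definition of $G(C)$ and then extract the colorings with $\omega^\ast(C)\ge 12$. Concretely, I would loop over all $19+1074=1093$ base colorings $C\in\mathcal{CG}(8,3)\cup\mathcal{CG}(8,4)$ supplied by Lemma~\ref{CG}, build the graph $G(C)$ exactly as described around \eqref{nine} and \eqref{ten}, attach the loops predicted by Lemma~\ref{lem:loop}, and compute $\omega^\ast(C)=\omega(G(C))+l(G(C))$. Retaining only those with $\omega^\ast(C)\ge 12$ would then yield the two claimed lists, and a clique enumeration on each survivor would certify both the exact value $\omega^\ast(C)=12$ and the uniqueness of the order-$12$ clique.

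First I would construct the vertex set $V$ of $G(C)$. For each candidate vector $(a_1,\dots,a_8)\in[5]^8$ I would test whether the bordered matrix \eqref{nine} is weakly quasi representable in $\mathbb{R}^3$ by the criterion of Proposition~\ref{prop:equivalent}: compute once a Gr\"{o}bner basis $\mathfrak{B}$ of the system \eqref{eq:wqrc} attached to $C$, then adjoin the vanishing $4\times 4$ principal-minor equations of the ${\rm Gram}$ matrix coming from the new ninth point and test solvability over $\mathbb{C}$. The edge test for two vertices is the same computation applied to \eqref{ten} for some new distance $i\in[5]$; loops are read off from Lemma~\ref{lem:loop}. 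To keep the $5^8$ enumeration and the pairwise edge tests tractable I would pre-reduce by the color-preserving automorphism group of the $8$-point configuration underlying $C$, testing one representative per orbit and propagating the verdict.

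Once each $G(C)$ is built, I would run a maximum-clique routine on it; since the admissible new points form a very constrained set, these graphs are small and the clique number is cheap to compute exactly. The output is a short list of colorings with $\omega^\ast(C)\ge 12$: one in the $3$-coloring case, which must be the cube by Lemma~\ref{lem:SubDodeca3ds}, and sixty-three in the $4$-coloring case. I would cross-check the number $63$ against the geometry: by Lemma~\ref{lem:SubDodeca4ds} there are $116$ eight-point $4$-distance subsets of the dodecahedron, and by Lemma~\ref{lem:structures} the field automorphism $\hat\Phi$ identifies their colorings in pairs; writing $116=2\cdot 53+10$ with the $10$ self-paired (fixed) colorings gives $53+10=63$, matching the computation and confirming that the survivors are precisely the colorings of $4$-distance dodecahedron subsets.

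The main obstacle will be the \emph{upper} bound together with uniqueness, that is, proving $\omega^\ast(C)=12$ rather than merely exhibiting an order-$12$ clique, and proving that clique is unique for each survivor. A priori a coloring might admit a larger mutually compatible family of new points, or several distinct maximum cliques; ruling this out requires a complete clique enumeration on every $G(C)$ and relies on the weak-quasi-representability test capturing all the rank obstructions in $\mathbb{R}^3$. I would discharge this by the exhaustive search above and validate it by checking that, for each surviving coloring, the unique $12$-clique together with the base $8$-point set realizes (after a choice of the five distances) the vertex set of a regular dodecahedron, which is consistent with, and bounded by, the geometric enumeration of Lemmas~\ref{lem:SubDodeca3ds} and \ref{lem:SubDodeca4ds}.
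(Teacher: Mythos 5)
Your proposal takes essentially the same approach as the paper: Lemma~\ref{lem:QRC} is established there by exactly this exhaustive computer search --- constructing $G(C)$ for all $19+1074$ base colorings via the weak quasi representability criterion of Proposition~\ref{prop:equivalent} (Gr\"{o}bner bases for system \eqref{eq:wqrc}), attaching loops per Lemma~\ref{lem:loop}, and enumerating cliques to certify both $\omega^\ast(C)=12$ and the uniqueness of the order-$12$ clique. Your additional consistency check that the count $63$ agrees with the $116$ dodecahedron subsets paired by $\hat{\Phi}$ is sound (it matches the accounting the paper carries out afterwards in Lemma~\ref{lem:realizations}: $53$ colorings contributing two dodecahedral realizations each plus $10$ self-paired ones), though it is supplementary to, rather than part of, the proof.
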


We classify $8$-point $s$-distance sets for $s\le 4$ 
which are realizations of quasi representable $s$-colorings in Lemma~\ref{lem:QRC}. 
If a realization $X$ of a coloring $C$ is a subset of the dodecahedron, then we have another realization $\hat{\Phi}(X)$ of $C$ by Lemma~\ref{lem:structures}. The realizations $X$ and $\hat{\Phi}(X)$ may be isomorphic. 

Let $C\in \mathcal{CG}(8,3)$ be the coloring in Lemma~\ref{lem:QRC} (i) 
and $X$ be a realization of $C$. 
Then we have $A(X)=\{1, \sqrt{2}, \sqrt{3}\}$ by solving system (\ref{eq:wqrc}). 
Then it is easy to see that $X$ is a cube. 

Let 
\[
  C_1 = \left(
    \begin{array}{cccccccc}
      0 & 1 & 2 & 2 & 1 & 3 & 3 & 1 \\
      1 & 0 & 1 & 2 & 2 & 2 & 3 & 2 \\
      2 & 1 & 0 & 1 & 2 & 1 & 2 & 3 \\
      2 & 2 & 1 & 0 & 1 & 2 & 1 & 3 \\
      1 & 2 & 2 & 1 & 0 & 3 & 2 & 2 \\
      3 & 2 & 1 & 2 & 3 & 0 & 2 & 4 \\
      3 & 3 & 2 & 1 & 2 & 2 & 0 & 4 \\
      1 & 2 & 3 & 3 & 2 & 4 & 4 & 0 \\
    \end{array}
  \right). 
\]
Then $C_1\in \mathcal{CG}(8,4)$ is a coloring in Lemma~\ref{lem:QRC} (ii). 
There exist four solutions of system (\ref{eq:wqrc}) for $C_1$ and 
there exist exactly four realizations of $C_1$ up to isomorphism. 
Let 
\begin{equation} \label{eq:w2}
    W=\{6,12,17,18,13,16,19,1\}
\end{equation}
be a subset of the vertex set of the dodecahedron graph as given in Figure~\ref{fig:Dodeca}. The shortest-path distance matrix $(\mathfrak{d}(x,y))_{x,y \in W}$ of $W$ is $C_1$, where
$C_1$ is indexed by $W$ using the order of elements in \eqref{eq:w2}. 
Four realizations of $C_1$ are given in Figure~\ref{fig:sub8}. 
Let $Y_1=\{A_1,A_2,\ldots , A_7\}, Y_2=\{B_1,B_2,\ldots , B_7\}$ and 
\[
X_i=Y_i \cup \{P_1\}, \quad X_i'=Y_i \cup \{P_1'\} \qquad (i=1,2), 
\]
where $P_i'$ is the reflection of $P_i$ in the plane $\pi_i$ for $i=1,2$. 
Then $X_1,X_2,X_1'$ and $X_2'$ are all the realizations of $C_1$. 
Both $X_1$ and $X_2$ are subsets of the dodecahedron and have the structure of the coloring $C_1$, that shows the situation of Lemma~\ref{lem:structures}.  
There exist exactly two solutions of system (\ref{eq:wqrc}) for the other $62$ colorings $C\in \mathcal{CG}(8,4)$. 

If $C\in \mathcal{CG}(8,4)$ is one of the ten colorings obtained from

\begin{center}
\begin{tabular}{lll}\label{tab:subset}
$\{2,7,9,10,13,16,17,18 \},
$ & $\{1,2,9,11,14,16,17,18  \},
$ &  $\{1,8,12,15,16,17,18,19\}, 
$ \\ $\{3,4,6,12,13,16,19,20\}, 
$ & $\{3,4,6,11,12,13,16,20\},
$  & $\{6,8,10,12,14,16,19,20\},
$ \\ $\{3,11,13,14,15,16,19,20\}, 
$ & $\{1,12,13,15,16,17,18,19\},
$ & $\{2,9,11,13,14,15,16,20\}, 
$ \\$\{1,12,14,15,16,17,18,19\},
$ & & \\
\end{tabular}
\end{center}
\noindent
by the above manner, then the two realizations of $C$ are isomorphic. 
Except for the above 10 colorings and $C_1$, each $C\in \mathcal{CG}(8,4)$ in Lemma~\ref{lem:QRC} (ii) 
has exactly two realizations up to isomorphism. 
Then we have the following lemma. 

\begin{figure}
\begin{center}
        \includegraphics[width=58mm]{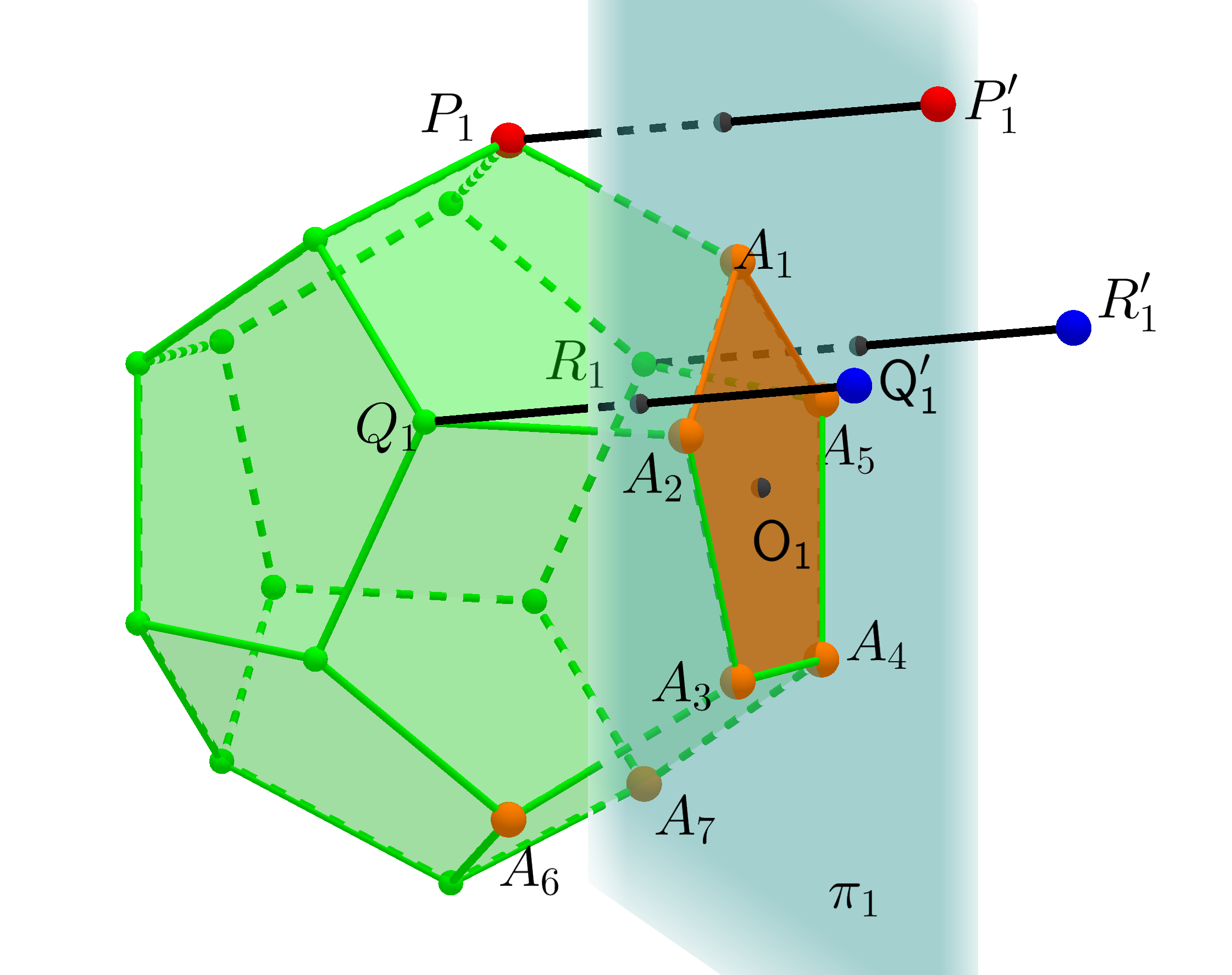}\qquad 
        \includegraphics[width=50mm]{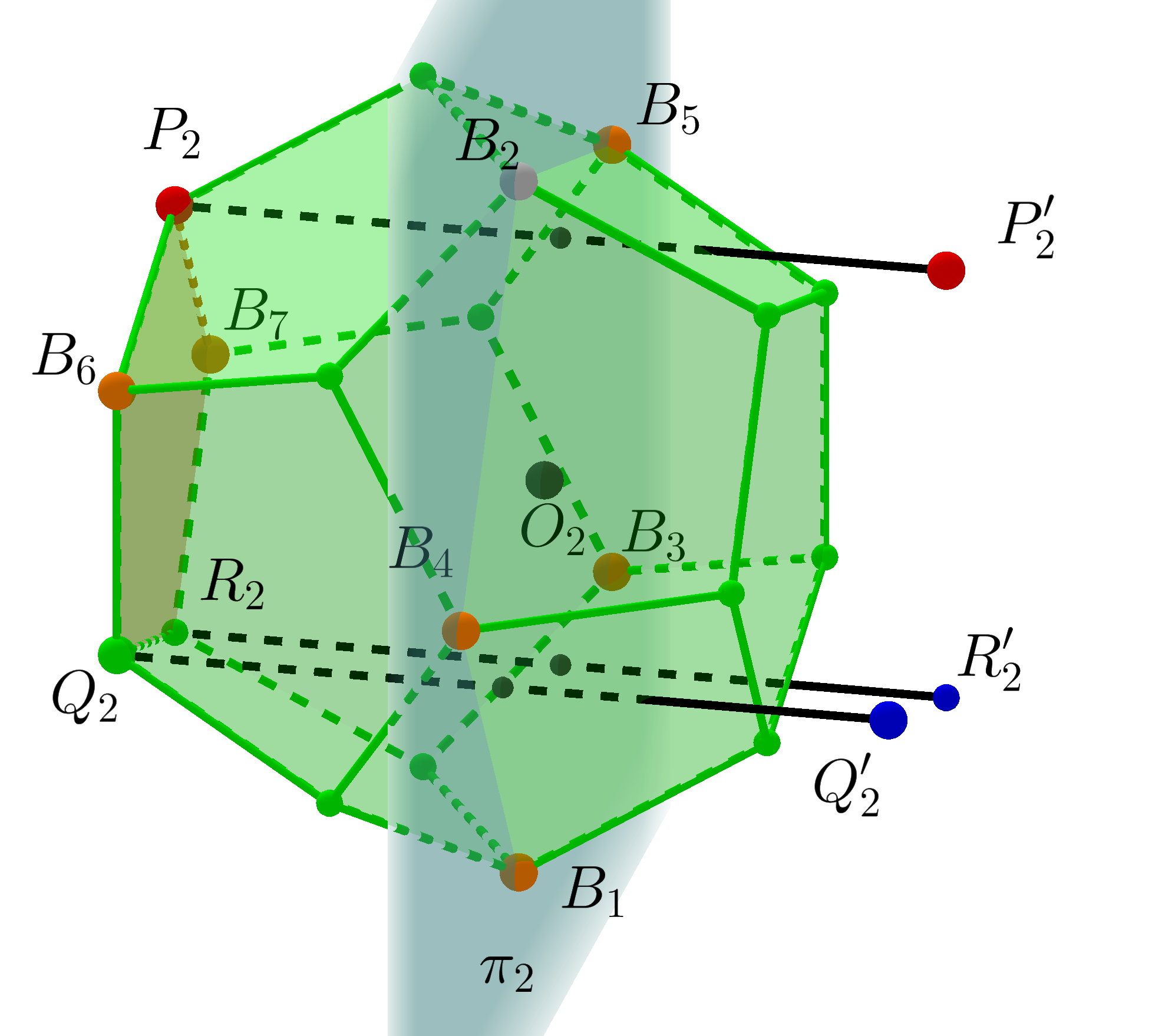}
\end{center}
   \caption{the $8$-point subsets which are realizations of $C_1$}
  \label{fig:sub8}
\end{figure}

\begin{lemma}\label{lem:realizations}
\begin{itemize}
    \item[{\rm (i)}] There exists a unique $3$-distance set whose 
    coloring is given in Lemma~\ref{lem:QRC}~(i). 
    \item[{\rm (ii)}] Among the colorings in Lemma~\ref{lem:QRC} (ii), we have the following:
    \begin{itemize}
        \item[{\rm (a)}] There exists exactly one coloring which has four solutions of system (\ref{eq:wqrc}) and the four realizations corresponding to the solutions are not isomorphic to each other.  
        \item[{\rm (b)}] There exist exactly $52$ colorings which have two solutions of system (\ref{eq:wqrc}) and the two realizations corresponding to the solutions are not isomorphic. 
        \item[{\rm (c)}] There exist exactly $10$ colorings which have two solutions of system (\ref{eq:wqrc}) but the two realizations corresponding to the solutions are isomorphic. 
    \end{itemize}
\end{itemize}
\end{lemma}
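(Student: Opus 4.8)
The plan is to prove each clause by explicitly solving the polynomial system (\ref{eq:wqrc}) for every coloring surviving Lemma~\ref{lem:QRC}, reconstructing the point configurations in $\mathbb{R}^3$ from the resulting squared-distance data, and sorting them into similarity classes; all of this runs on the Gr\"{o}bner-basis computation already used to build $G(C)$. Part (i) is immediate: for the single cube coloring, solving (\ref{eq:wqrc}) gives $A(X) = \{1, \sqrt{2}, \sqrt{3}\}$, and the only $8$-point set in $\mathbb{R}^3$ realizing these three distances with the prescribed coloring is the cube, so the $3$-distance realization is unique.

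For part (ii), I would first run (\ref{eq:wqrc}) for each of the $63$ colorings of Lemma~\ref{lem:QRC}~(ii) and record the solutions yielding a genuine positive-semidefinite Gram matrix of rank at most $3$. This separates $C_1$, the unique coloring with four admissible solutions, from the remaining $62$, each with exactly two. For $C_1$ the four solutions are realized by the configurations $X_1, X_2, X_1', X_2'$ described before the lemma: $X_1, X_2$ are the two dodecahedral realizations (Galois conjugate under $\hat{\Phi}$ by Lemma~\ref{lem:structures}), while $X_1', X_2'$ arise by reflecting the eighth point across $\pi_i$. To finish clause (a) I would confirm these four are pairwise non-isomorphic using a similarity invariant, for instance the sorted multiset of pairwise distances, refined if necessary by an orientation invariant separating a reflected configuration from its partner.

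For the $62$ colorings with two solutions, the task is to decide, coloring by coloring, whether the two realizations are similar, and Lemma~\ref{lem:structures} is the organizing principle: the two realizations are the Galois conjugates $X$ and $\hat{\Phi}(X)$, so their similarity is equivalent to $X$ being similar to its conjugate dodecahedral subset. I would test this by comparing the two sorted pairwise-distance spectra and searching for an orthogonal map (after rescaling) intertwining the two Gram matrices up to a coloring automorphism. This check isolates exactly the ten listed colorings as the self-conjugate cases giving isomorphic realizations (clause (c)), leaving $62 - 10 = 52$ colorings with two non-isomorphic realizations (clause (b)); together with $C_1$ this exhausts all $1 + 52 + 10 = 63$ colorings of Lemma~\ref{lem:QRC}~(ii).

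The main obstacle is the isomorphism decision underlying clauses (b) and (c). Two realizations of one coloring share an identical combinatorial distance pattern but carry distinct metric data, so no comparison at the level of colorings can separate them; one must descend to the geometry and either exhibit an explicit similarity or certify its absence through a similarity-invariant. Carrying out this certification uniformly across all $62$ colorings, while tracking which solutions truly embed as subsets of the dodecahedron so that $\hat{\Phi}$ applies, is the delicate bookkeeping at the heart of the argument.
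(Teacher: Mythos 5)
Your proposal follows essentially the same route as the paper: the lemma is established there by exactly this computer-aided pipeline --- solving system (\ref{eq:wqrc}) via Gr\"{o}bner bases for each of the $1+63$ colorings surviving Lemma~\ref{lem:QRC}, counting the admissible solutions (four for $C_1$, two for the rest, one for the cube coloring), and deciding isomorphism of the corresponding realizations, with Lemma~\ref{lem:structures} (conjugation by $\hat{\Phi}$) serving, as in your write-up, as the organizing principle for why the solutions come in conjugate pairs and when the pair collapses to a single isomorphism class. One minor caution: since isomorphism of distance sets is defined via similar transformations, which include reflections, an ``orientation invariant'' can never certify non-isomorphism; this is harmless in your argument because your decisive test (permutation equivalence of rescaled Gram matrices up to a coloring automorphism) is the correct one, and the four realizations of $C_1$ are in any case separated by their normalized distance values.
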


By Lemma~\ref{lem:realizations}, there exist exactly 118 of 4-distance sets in $\mathbb{R}^3$ given from the colorings in Lemma~\ref{lem:QRC}.  
The two sets $X_1'$ and $X_2'$ are not subsets of the dodecahedron. 
By Lemma~\ref{lem:SubDodeca4ds}, the remaining 116 4-distance sets should be  subsets of the dodecahedron. 
Note that the cube is also a subset of the dodecahedron. 
Let $S$ be the set of the cube and the 116 4-distance subsets. 
By Lemma~\ref{lem:QRC}, $\omega  (G(C))=12$ for the coloring $C$ obtained from $X \in S$, and the corresponding clique of order $12$ is unique.  
Therefore a 20-point 5-distance set that contains $X \in  S$ must be the dodecahedron. 

In order to prove Theorem~\ref{thm:contain8}, 
we prove that for $i=1,2$ there is no subset $Z \subset \mathbb{R}^3$ such that $X_i'\cup Z$ is a 20-point 5-distance set.
We consider a candidate $P\in \mathbb{R}^3$ such that $|A(X_i'\cup \{P\})|\le 5$ 
for $i=1,2$ by using (\ref{nine}). 
The candidates are $\{Q_1', R_1',O_1\}$ for $X_1'$ and $\{Q_2', R_2',O_2\}$ for $X_2'$ in Figure~\ref{fig:sub8}. The points $Q_i'$ and $R_i'$ are the reflections 
of $Q_i$ and $R_i$ in the plane $\pi_i$, respectively. 
Moreover, $O_1$ ({\it resp.} $O_1'$) is the center of the pentagon consisted of 
$\{A_1, A_2, \ldots, A_5 \}$ ({\it resp.} $\{B_1,B_2,\ldots , B_5\}$). 
Thus the cardinality of a 5-distance set that contains $X_i'$ is at most $11$. 
Therefore a proof of Theorem~\ref{thm:contain8} is complete.

Finally, we prove Theorem~\ref{thm:main}. 
\begin{proof}[Proof of Theorem~\ref{thm:main}]
By Theorem~\ref{thm:sub} and $g_3(2)=6$, every 5-distance set in $\mathbb{R}^3$ at least 20 points contains an 8-point $s$-distance set for some $3\leq s \leq 4$. 
 Therefore the assertion follows by Theorem~\ref{thm:contain8}.  
\end{proof}

\bigskip

\noindent
\textbf{Acknowledgments.} 
The authors thank Kenta Ozeki for providing information on the papers \cite{ES18,KO13} relating to graphs without two vertex-disjoint odd cycles. 
Nozaki is supported by JSPS KAKENHI Grant Numbers 
19K03445 and 20K03527. 
Shinohara is supported by JSPS KAKENHI Grant Number  18K03396.

\quad 

\noindent
{\it Hiroshi Nozaki}\\ 
	Department of Mathematics Education, 
	Aichi University of Education, 
	1 Hirosawa, Igaya-cho, 
	Kariya, Aichi 448-8542, 
	Japan.\\
E-mail address: {\tt hnozaki@auecc.aichi-edu.ac.jp}

\quad \\
\noindent
{\it Masashi Shinohara}\\
	Faculty of Education, 
	Shiga University,
	2-5-1 Hiratsu, Otsu, Shiga 520-0862, 
	Japan. \\
	E-mail address: {\tt shino@edu.shiga-u.ac.jp}
	
\end{document}